\newcommand\A{\mathcal{A}}
\newcommand\C{\mathcal{C}}
\newcommand\D{\mathcal{D}}
\newcommand\R{\mathbb{R}}
\newtheorem{theorem}{Theorem}
\newtheorem{lemma}[theorem]{Lemma}
\newtheorem{corollary}[theorem]{Corollary}
\newtheorem{proposition}[theorem]{Proposition}
\newtheorem{definition}{Definition}
\newtheorem{remark}{Remark}
\def\ve{{\varepsilon}}
\def\eps{{\varepsilon}}
\def\s{{\mathbf{s}}}
\def\D{{\cal D}}
\def\Ga{{\cal G}_\alpha}
\def\Ger{{\cal G}_{\eps,\alpha}}
\def\J{{\cal J}}
\def\.{{\;}}
\def\bv{{{\tt{BV}}(\Omega)}}
\def\lzo{{L_2(\Omega)}}
\def\phia{{\phi_{a}}}
\def\phic{{\phi_{s}}}
\def\vb{\textbf{b}}
\def\t{\triangle}
\journal{Journal of Mathematical Analysis and Applications}
\begin{document}

\begin{frontmatter}

%% Title, authors and addresses

%% use the tnoteref command within \title for footnotes;
%% use the tnotetext command for theassociated footnote;
%% use the fnref command within \author or \address for footnotes;
%% use the fntext command for theassociated footnote;
%% use the corref command within \author for corresponding author footnotes;
%% use the cortext command for theassociated footnote;
%% use the ead command for the email address,
%% and the form \ead[url] for the home page:
%% \title{Title\tnoteref{label1}}
%% \tnotetext[label1]{}
%% \author{Name\corref{cor1}\fnref{label2}}
%% \ead{email address}
%% \ead[url]{home page}
%% \fntext[label2]{}
%% \cortext[cor1]{}
%% \address{Address\fnref{label3}}
%% \fntext[label3]{}

\title{Regularization approaches for quantitative Photoacoustic tomography using the radiative transfer equation}

%% use optional labels to link authors explicitly to addresses:
%% \author[label1,label2]{}
%% \address[label1]{}
%% \address[label2]{}

\author{A.~De Cezaro and F.~Travessini De Cezaro}

\address{Institute of Mathematics Statistics and Physics,
Federal University of Rio Grande, Av. Italia km 8, 96203-900, Rio
Grande, Brazil ({\tt decezaromtm@gmail.com}).}

\begin{abstract}
Quantitative Photoacoustic tomography (QPAT) is an emerging medical
imaging modality which offers the possibility of combining the high
resolution of the acoustic waves and large contrast of optical waves by
quantifying the molecular concentration in biological tissue.

In this paper, we prove properties of the forward operator that
associate optical parameters from measurements of a reconstructed
Photoacoustic image. This is often referred to as the optical
inverse problem, that is nonlinear and ill-posed. The proved
properties of the forward operator provide sufficient conditions to
show regularized properties of approximated solutions obtained by
Tikhonov-type approaches. The proposed Tikhonov- type approaches
analyzed in this contribution are concerned with physical and
numerical issues as well as with \textit{a priori} information on
the smoothness of the optical coefficients for with (PAT) is
particularly a well-suited imaging modality.

\end{abstract}

\begin{keyword}
%% keywords here, in the form: keyword \sep keyword
Quantitative Photoacoustic Tomography \sep Tikhonov-type regularization \sep
Convergence \sep Stability.
%% PACS codes here, in the form: \PACS code \sep code
\PACS 65N12 \sep 65R32 \sep 65F32
%% MSC codes here, in the form: \MSC code \sep code
%% or \MSC[2008] code \sep code (2000 is the default)
\end{keyword}

\end{frontmatter}

%% \linenumbers

%% main text

%----------------------------------------------------------------------------%

\section{Introduction}\label{sec:1}

Photoacoustic tomography (PAT) is an emerging medical imaging
modality which combines the high contrast of the optical waves and the
large resolution of the acoustic waves by a laser-generated
ultrasound. From the practical point of view, a PAT image is
reconstructed from temporal surface measurements of propagated
photoacoustic waves which are generated by illuminating an optically
 absorbing and scattering medium with short
pulses of variable or near-infrared light. As the optical radiation
propagates, a fraction of its energy is absorbed by the chromophores
within the tissue and generates a small and localized heating and
pressure of the underlying medium. Due the elasticity of soft
tissue, the given perturbation of the physical conditions produces a
spatial dependent ultrasound signal that propagated through the
domain of interest. This physical phenomenon is often called the
photoacoustic effect. The resulting emitted pressure wave is
measured by ultrasonic transducers located on the surface of the
domain of interest as a function of time for which one tries to
recover the acoustic source that gives us information about the rate
of absorption at each point within the body, creating an image.

The image reconstruction in (PAT) involves the solution of two
inverse problems: the first consists in reconstructing the amount of deposit
energy from surface measurements of the propagated acoustic waves. In this issue, there are
many results on both theoretical and
numerical, e.g. \cite{ABJK2011, BR11, BRUZ11,BU10, KK2008, KS2012,
JSUZ2011, SU2009, TZC2010, TCKA12} and references therein. %%Theoretically, one is interested in the
%%uniqueness and stability of the solution of the inverse problem;
%%numerically, one is interested in designing efficient numerical
%%algorithms to recover the solution of the inverse problem.
In particular, in \cite{SU2009} an if-and-only-if conditions for
uniqueness and stability is given and an explicit formula of a
convergent Neumann series type is derived. Under the assumptions of
constant sound speed and odd dimension, a time-reversal reconstruction
algorithm is presented in \cite{TZC2010}. For even dimension, we only can
expect an approximated solution \cite{JSUZ2011, SU2009}. Provided
that the first inverse problem is well-studied, we concentrate our
effort in the second inverse problem in quantitative PAT (QPAT): To
determines the chromosphere concentration distributions from the
reconstructed PAT images. Since chromosphere concentrations are
linearly related to the optical absorption concentration via the
chromophores' molar absorption coefficients, we are looking for to determine a
quantitative accurate estimate for the absorption coefficient from
the measured energy map.

Many recent contributions attempt to recover the absorption
coefficient in PAT, e.g \cite{ABJK2011, BR11, BRUZ11, BU10, KS2012}
and references therein. However, the diffusive nature of light
propagation in a turbid medium such as biological tissue means that
information is quickly lost when it travels further away from the
source \cite{KK2008, STCA13, TCKA12}. Therefore, the measured energy
maps depend of both optical coefficients, absorption and scattering.
Since neither are likely to be known or easily measured, the (QPAT)
seeks for recovering quantitative estimates of both coefficients
simultaneously.

Our paper is organized as follows: In the remained part of this section,
differently of the early model-based inversion using the
diffusion approximation \cite{ABJK2011, BR11, BRUZ11, BU10, KS2012}
for PAT modeling, we introduce the full radiative transfer equation model
to light propagation \cite{DautrayLions93, KK2008, STCA13, TCKA12}.
In Section~\ref{section2}, we collect the results of existence and
uniqueness for a solution of the radiative transfer equation as
well as regularity of such a solution which will be fundamental for
the following analysis. Details can be found in \cite{DautrayLions93}.

The novelty of the paper starts in Section~\ref{Subsec:Forward}
where we prove continuity, compactness and Fr\'echet
differentiability for the forward operator, provided that the
absorption and scattering coefficients are embedding in appropriated
topologies. These properties allow us to conclude the ill-posedness of the inverse
problem and the necessity of introducing regularization approaches
to obtain stable approximate solutions. In Section~\ref{sec:regularization} we propose Tikhonov-type
regularization approaches regards \textit{a priori} smoothness
assumptions on the coefficients. We prove standard regularization
properties \cite{EngHanNeu96} of the approximated solutions, i.e.,
we prove convergence and stability with respect the measured data.
In particular, we proposed a level set regularization approach for
the case in which the coefficients are assumed to be piecewise constant:
a particularly well-suited to imaging the blood vasculature for with
(PAT) is widely used. Although we do not show numerical results in
this contribution, we provide a glimpse of the numerical derivation
in Section~\ref{sec:numerical}, we are supporting the numerical implementation in \cite{STCA13}
 for which by the best of the authors acknowledge there was not a fully
regularization theory derived before. In Section~\ref{sec:conclusion}, we formulate some conclusions and
future works. In the remainder part of this introduction we present
the radiative transfer equation which will be our forward model for
(PAT) and we also introduce some notation.

\subsection{The forward model: radiative transfer
equation}\label{subsec:RTE}

Recently, many advances and several inversion methods have been
proposed for (QPAT) \cite{ABJK2011, BR11, BRUZ11, BU10, KS2012}.
The proposed models typically assume the diffusion
approximation to the radiative transfer equation (RTE), i.e. they
assume that the propagation of light throughout the tissue is
near-isotropic. However, lights propagation in a turbid regions is highly
anisotropic in regions close to light sources and it does not
behave diffusively until travel away to the source location. Hence,
the diffusion approximation does not provide a suitable accurate
model in a significant portion of the image which often contains
information of great interest \cite{STCA13, TCKA12}.

In this approach we consider the second inverse problem in PAT in a region of interest $\Omega \subset
\mathcal{R}^n$, with $n=2,3$. Moreover, we assume that light transport in a turbid medium may
be modeled analytically using the radiative transfer equation (RTE)
\begin{equation}\label{eq:RTE}
(\s \cdot \nabla + \mu_a(x) + \mu_s(x))u(x,\s) -
\mu_s(x)\int_{S^{n-1}} \Theta(\s, \s') u(x, \s') ds' = q(x, \s)\,.
\end{equation}

The integro-differential equation~(\ref{eq:RTE}) represents the
conservation of energy in a particular control volume. The physical
interpretation of equation~(\ref{eq:RTE}) can be read as follows: the
light travels throughout a region in a particular direction $\s \in
S^{n-1} $. The energy can be lost through the absorption and scattering
of a photon out of the direction of interest or the net outflow of
the region due to the gradient, and can be gained by the scattering
of a photon into the direction of interest or from any light sources
in the medium. The probability per unit length of an absorbing and
scattering event are represented, respectively, by the absorption
coefficient $\mu_a(x)$ and the scattering coefficient $\mu_s(x)$, at
a point $x \in \Omega$. $\Theta(\s, \s')$ is the scattering phase
function which is a probability density function that describes the
probability that a photon traveling in a direction $\s$  will be
scattered into a direction $\s'$. Also, $q$ represents the light source. Since
light propagates faster than sound, the optical propagation and
absorption can be treated as instantaneous on an acoustics
timescale, the quantity of interest is the time- integrated radiance
$u (x, \s)$, which is the energy per unity of area at a point $x \in
\Omega$ in a direction $\s \in S^ {n-1} $. Assuming that there are no
photons traveling in an inward direction at the boundary $\partial
\Omega$ except at the source position $ \Gamma_s \subset \partial
\Omega$, we can complete the (RTE) equation~(\ref{eq:RTE}) with the
given boundary condition
\begin{eqnarray}\label{eq:RTE_bc}
u(x,\s) =  \left\{ \begin{array}{ccccc}
            u_0(x,\s)\,, &  x \in  \cup \Gamma_s\, & \s \cdot \eta <
            0\\
            0\,,      & x \in  \partial \Omega - \cup \Gamma_s\, &
\s \cdot \eta < 0\,,
               \end{array}
               \right.
\end{eqnarray}
where $u_0$ is the boundary source and $\eta$ is a unitary vector
normal to $\partial \Omega$. In this paper, we will assume that
$u_0$ has a compact support in $\partial \Omega$. This property is
necessary for the existence of the trace operator  in appropriated
spaces (see Proposition~\ref{pr1} below).

The total energy at a point $x \in \Omega$ shall be equal to the
integral of the total energy per unit of area $u(x, \s)$ over all
directions, i.e.,
\begin{equation}\label{eq:fluence}
U(x) = \int_{S^{n-1}} u(x, \s) d\s\,,
\end{equation}
and it is often called the fluence.

From thermodynamic considerations we are allowed to write the
initial pressure $p_0$ arising from this optical absorption as
\begin{equation}\label{eq:p0}
p_0(x) = \Pi(x) F(x)\,,
\end{equation}
where
\begin{equation}\label{eq:h}
F(x) = F(\mu_a(x), \mu_s(x)): = \mu_a(x) U(\mu_a(x), \mu_s(x))\,,
\end{equation}
is the amount of optical energy absorbed per unit volume in
$\Omega$. $\Pi$ represents the Gr\"{u}neisen parameter,  which is a dimensionless,
tissue-specific property responsible for the photoacoustic
efficiency, i.e, representing the conversion efficiency of the heat
energy into pressure.

The first inverse problem in PAT is recovering the initial pressure
$p_0(x)$ from measurements of the acoustic pressure $p(x,t)$ over
some arbitrary measurement surface. When the sound speed $c_s$ and
the density are uniform and the optical excitation is regarded as
instantaneous, the acoustic propagation may be well described by
initial value problem for the homogeneous wave equation
\begin{equation}\label{eq:wave}
p_{tt} - c_s^2 \Delta p = 0\,,
\end{equation}
and the initial conditions are given by
\begin{equation}\label{eq:initial_wave}
p(x,0) = p_0(x)\,, \qquad p_t(x,0) = 0\,.
\end{equation}

When the sound speed $c_s$ is constant, explicit formulas for
recovering $p_0$ have been obtained for a large class of geometries
of interest, e.g. \cite{ABJK2011, BR11, BRUZ11,BU10, KK2008, KS2012,
JSUZ2011, SU2009, TZC2010, TCKA12} and references therein. When the
sound speed is not constant but it is known and non-trapping conditions
are assuming the time reversal algorithm produces accurate solutions as showed
in \cite{TZC2010, SU2009}.

In this paper, we assume that the first inverse problem is solved
and that $p_0$ is known at least approximately. Normally, precise
estimations for $\Pi$ are known from recorded experiments and
we can assume that $\Pi$ is known throughout the domain. Hence,
it is straightforward to obtain a measured observed energy map
\begin{equation}
E(x) = p_0(x)/ \Pi(x)\,.
\end{equation}
On the other hand, in practical applications it is very
unlikely that one can get the exact solution
$p_0$ using any of the  well-known reconstruction
methods,  e.g. \cite{TZC2010, SU2009} and references
therein. Indeed, many sources of noise can affect the measurements,
e.g. thermal noise in the detectors.  Moreover, the recording estimations
for $\Pi$ also give us only an approximation.
Therefore, instead of assuming exact data $E \in L^2(\Omega)$ we assume to
know a measured absorbed energy map $E^\delta \in L^2(\Omega)$ satisfying
\begin{eqnarray}\label{eq:noise}
\|E - E^\delta\|_{L^2(\Omega)} \leq \delta\,,
\end{eqnarray}
where $\delta$ is a bounded for the noise level.

Since chromophore concentration is linearly related to the optical
coefficient via the chromophores molar absorption coefficient, it
can be obtained straightforwardly from $\mu_a$ provided that all
contributing chromophore types are known. Therefore, we seek to
determine a quantitative accurate estimate of $\mu_a$ from
measurements of the absorbed energy map $E^\delta$. Resuming, this is the
second inverse problem in QPAT. However, the dependence of $U$ on
$\mu_a$ and $\mu_s$ means that $E^\delta$ (and hence $p_0$) is nonlinear
related to the absorption and scattering coefficients. Since neither
of them are likely to be known or easily measured, it means that we
need to look for  recovering quantitative estimates of both coefficients simultaneously.

\paragraph{Notation: }

Throughout this presentation, we assume that $\Omega \subset
\mathcal{R}^n$, with $n=2,3$ is a bounded domain with
$\mathcal{C}^1$ boundary $\partial \Omega$. We define the
product domain $\D : = \Omega \times S$, where $S:=S^{n-1}$ is the
sphere in $\mathcal{R}^n$. $C$ will denote a generic constant, whose
values may depend on the context.

In the product space, we have the boundary $\Gamma: = \partial
\Omega \times S$ that can be decomposed into a inflow part
$\Gamma_{-} = \{(x, s) \in \Gamma\,:\, \s \cdot \eta < 0\},$ an
outflow part $\Gamma_{+} = \{(x, s) \in \Gamma\,:\, \s \cdot \eta >
0\}$, and a remainder tangential part $\Gamma_0 = \Gamma -
(\Gamma_{-} \cup \Gamma_{+})$.

In this contribution, we will consider the parameter space belongs
to the subset
\begin{equation}
D(F): = \{ (\mu_a, \mu_s)\,:\, 0 < \underline{\mu} \leq \mu_a, \mu_s
\leq \overline{\mu}\}\,,
\end{equation}
for $\underline{\mu}, \overline{\mu}$ fixed constant values under
different topologies.

For $L^p(\Omega)$ we denote the Lebesgue space of real functions on
$\Omega$ such that $\int_\Omega |f(x)|^p dx < \infty$ if $1 \leq p <
\infty$ and $ess\sup|f(x)| \leq \infty$ for $p=\infty$. We also
denote by $W^{k,p}(\Omega)$ the Sobolev space of all functions whose
all the derivatives up to the order $k$ belongs to $L^p(\Omega)$. In
particular, for $p=2$ we have the Hilbert spaces $W^{k,2}(\Omega) =
H^k(\Omega)$. Moreover, $C^\infty_0(X)$ denote the set of infinity
continuous differentiable functions which compact support in $X$.

To avoid possible confusions, we shall introduce also the Banach
space $L^p(\D)$ ($1\leq p < \infty$) defined on the space of
Lebesgue function for the product measure $dx d\s$ such that
$\|f\|^p_{L^p(\D)} = \int_\Omega \int_S |f(x,\s)|^p dx d\s <
\infty$. Moreover, $W^p(\D): = \{ f \in L^p(\D) : \s \cdot \nabla f
\in L^p(\D)\}$ denotes the Banach space where the
integro-differential operator in equation~(\ref{eq:RTE}) will be
well-posed.

As we will see, because of physical reasons the natural spaces for the
radiance and for the fluence are $L^1(\D)$ and $L^1(\Omega)$,
respectively. Indeed, we can define the so-called transport operator
$T$ as
\begin{equation}\label{eq:T}
T u(x, \s) = (\s \cdot \nabla + \mu_a(x) + \mu_s(x)) u(x, \s) -
\mu_s(x) \int_S \Theta(\s, \s') u(x, \s') d\s'\,,
\end{equation}
which it is naturally defined in the space $L^1(D)$ of integrable
functions and its domain $D(T)$ is given by
\begin{eqnarray*}
D(T): = \left\{  u \in L^1(D)\, :\, Tu \in L^1(\D) \mbox{ and }
u(x,s) = 0, \mbox{ a.e. } (x,\s) \in \Gamma_{-} \right\} \,.
\end{eqnarray*}
Therefore, since the absorption and scattering coefficient belong to
$D(F)$, then is easy to see that $D(T) \subset W^p(D)$. Of course, the
trace operator must make sense in such topology. It will be
guarantee in Lemma~(\ref{lemma1-DL-v6}) below.

However, for numerical as well as theoretical reasons, other
$L^p$-spaces play an important rule in the game. In particular, the
development of computational schemes in a Hilbert space makes
$L^2(\D)$ with the inner product
\begin{eqnarray*}
( u, v)_{L^2(\D)}: = \int_\Omega \int_S u(x,\s) v(x,\s) dx d\s
\end{eqnarray*}
a very suitable candidate.

We will denote the product $X \times X$ of the two Banach
spaces by $[X]^2$.

\section{On the existence and regularity of a solution of RTE
equation} \label{section2}

In this section we revisit some well-known results of existence
and regularity for the (RTE)
equation~(\ref{eq:RTE})-(\ref{eq:RTE_bc}), for which we suggest the
reference \cite{DautrayLions93}. %%%%Such a collection of results and
%%%%the respective techniques of proofs will be mimicked for proving
%%%%properties of the forward operator in Section~\ref{Subsec:Forward}.

The first result in this direction is concerned with the trace
operator and the well posedness of the boundary
condition~(\ref{eq:RTE_bc}).

\begin{lemma}
The inflow and the outflow boundaries $\Gamma_{-}$ and $\Gamma_{+}$
are open subsets of $\Gamma$ and $\Gamma_0$ is a closed subset of
$\Gamma$ with $ (2n - 2) $-dimensional zero measure.
\end{lemma}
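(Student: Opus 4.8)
The plan is to exhibit $\Gamma_{-}$, $\Gamma_{+}$ and $\Gamma_0$ as the sub-level, super-level and zero set of a single continuous scalar field on $\Gamma$, which settles the three topological assertions at once, and then to treat the measure claim by a slicing (Tonelli) argument.

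First I would introduce the map $\phi\colon\Gamma\to\mathbb{R}$, $\phi(x,\s)=\s\cdot\eta(x)$, where $\eta(x)$ denotes the outward unit normal to $\partial\Omega$ at $x$. Since $\partial\Omega$ is of class $C^1$, the normal field $x\mapsto\eta(x)$ is continuous on $\partial\Omega$, and because the Euclidean inner product is continuous, $\phi$ is continuous on $\Gamma=\partial\Omega\times S$. The topological statements are then immediate: $\Gamma_{-}=\phi^{-1}\big((-\infty,0)\big)$ and $\Gamma_{+}=\phi^{-1}\big((0,+\infty)\big)$ are preimages of open half-lines, hence open in $\Gamma$, while $\Gamma_0=\phi^{-1}(\{0\})=\Gamma\setminus(\Gamma_{-}\cup\Gamma_{+})$ is the complement of an open set, hence closed.

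For the measure statement I would equip the $(2n-2)$-dimensional $C^1$ manifold $\Gamma$ with the product of the $(n-1)$-dimensional surface measures $d\sigma$ on $\partial\Omega$ and $d\s$ on $S$, which is precisely its $(2n-2)$-dimensional surface (Hausdorff) measure. The geometric heart of the argument is that, for each fixed $x\in\partial\Omega$, the slice $\{\s\in S:\phi(x,\s)=0\}=\{\s\in S:\s\perp\eta(x)\}$ is the great subsphere orthogonal to $\eta(x)$, i.e. an isometric copy of $S^{n-2}$ embedded in $S^{n-1}$; being an $(n-2)$-dimensional submanifold it is $d\s$-null in $S$. Applying Tonelli's theorem to the indicator of the closed (hence measurable) set $\Gamma_0$ then yields
\[
\mathrm{meas}_{2n-2}(\Gamma_0)=\int_{\partial\Omega}\Big(\int_{S}\mathbf{1}_{\Gamma_0}(x,\s)\,d\s\Big)\,d\sigma(x)=\int_{\partial\Omega}0\,d\sigma(x)=0,
\]
so $\Gamma_0$ carries vanishing $(2n-2)$-dimensional measure.

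I expect the only genuinely delicate points to be the two facts feeding the Tonelli step: that a great subsphere is null inside the sphere (a standard consequence of its being a lower-dimensional smooth submanifold) and that the chosen product measure is the correct $(2n-2)$-dimensional measure on $\Gamma$. Everything else reduces to the continuity of $\eta$ — the single place where the $C^1$ regularity of $\partial\Omega$ enters — which also guarantees the measurability needed to invoke Tonelli.
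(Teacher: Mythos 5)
Your proposal is correct, and its topological half coincides with the paper's argument: both rest on the continuity of the map $(x,\s)\mapsto \s\cdot\eta(x)$ (point (i) in the paper's proof) and read off openness of $\Gamma_{-},\Gamma_{+}$ and closedness of $\Gamma_0$ from preimages. Where you genuinely diverge is the measure-zero claim. The paper disposes of it in one sentence, invoking (ii) --- that $\partial\Omega$ is locally diffeomorphic to a subset of $\mathbb{R}^{n-1}$ --- together with ``the standard product structure'' of $\Gamma_0$, i.e.\ a local-coordinates argument whose null-set computation is left entirely to the reader. You instead keep the global product structure $\Gamma=\partial\Omega\times S$ and apply Tonelli: for each fixed $x$ the slice of $\Gamma_0$ is the great subsphere $\{\s\in S:\s\perp\eta(x)\}$, an isometric copy of $S^{n-2}$, hence null in $S^{n-1}$, so the iterated integral of the indicator of $\Gamma_0$ vanishes. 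This buys a self-contained, verifiable proof of the dimensional statement --- valid also in the borderline case $n=2$, where the slice is a two-point set --- at the modest price of justifying that the product of the surface measures is the $(2n-2)$-dimensional measure on $\Gamma$; the paper's sketch is shorter but is really only a pointer to such an argument rather than the argument itself. The two delicate points you flag (a great subsphere is null, and the product measure is the correct surface measure) are exactly the places where substance is needed, and both are standard facts.
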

\begin{proof}
The $\mathcal{C}^1$ regularity of $\partial \Omega$ implies that:
(i) map $(x,\s) \longmapsto \s \cdot \eta$ is continuous and (ii)
$\partial \Omega$ is locally diffeomorphic to a subset of $R^{n-1}$.
From (i) we have that $\Gamma_0$ is closed and $\Gamma_{-}$ and
$\Gamma_{+}$ are open subsets of $\Gamma$. From (ii) and the
standard product structure of $\Gamma_0, \Gamma_{-}, \Gamma_{+}$ we
have the assertions.
\end{proof}

Previews lemma allows to identify measurable functions on $\Gamma$
with functions defined in $\Gamma_{-} \cup \Gamma_{+}$. However,
if $u \in W^p(\D)$, it is not true that the trace
$u|_{\Gamma_{-}}$ (respectively $u|_{\Gamma_{+}}$) satisfies
$$ \int_{\Gamma_{-}} \s \cdot \eta |u|^p dx d\s < \infty$$
even for $p= 2$, see \cite{DautrayLions93}. But the result is true
if $u$ has a compact support in $\Gamma_{-}$ as shown by the
following proposition.
\begin{proposition}\label{pr1}
Let $\mathcal{K}$ be a compact subset of $\Gamma_{-}$ (resp.
$\Gamma_{+}$). Then the trace map $ u \longmapsto u|_{\mathcal{K}}$
defined in $C^\infty_0(\overline{\D})$ is extended by continuity to
a bounded linear operator from $W^p(\D)$ to $L^p(\mathcal{K})$.
\end{proposition}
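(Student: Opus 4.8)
The plan is to prove the proposition in two stages: first establish a uniform trace inequality for smooth functions, and then extend the operator by continuity. Precisely, I would show that there is a constant $C=C(\mathcal{K},p)>0$ with
\[
\|u|_{\mathcal{K}}\|_{L^p(\mathcal{K})}\le C\,\|u\|_{W^p(\D)}
\]
for every $u\in C_0^\infty(\overline{\D})$, where $\|u\|_{W^p(\D)}^p:=\|u\|_{L^p(\D)}^p+\|\s\cdot\nabla u\|_{L^p(\D)}^p$ is the natural graph norm of the space. Once this bound is available, the bounded linear extension to all of $W^p(\D)$ follows at once from the density of $C_0^\infty(\overline{\D})$ in $W^p(\D)$. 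The whole difficulty is concentrated in the hypothesis that $\mathcal{K}$ is a compact subset of the \emph{open} set $\Gamma_{-}$: the natural trace identity below carries the weight $\s\cdot\eta$, which degenerates on the tangential set $\Gamma_0$ and is precisely the mechanism behind the failure of the unweighted trace noted just before the statement. Since $\mathcal{K}\cap\Gamma_0=\emptyset$ and $(x,\s)\mapsto\s\cdot\eta$ is continuous by the previous lemma, compactness furnishes a constant $c>0$ with $\s\cdot\eta\le -c$ throughout $\mathcal{K}$, so on $\mathcal{K}$ the weight stays bounded away from zero.

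The main tool is a localized divergence identity. For each fixed $\s\in S$ the constant field $x\mapsto\s$ is divergence free, so $\mbox{div}_x(\s\,g)=\s\cdot\nabla g$ for any scalar $g$. I would pick a cutoff $\psi\in C^\infty(\overline{\D})$ with $0\le\psi\le1$, equal to $1$ on a relative neighborhood of $\mathcal{K}$ in $\Gamma$ and with $\supp(\psi|_{\Gamma})\subset\Gamma_{-}$; this is possible exactly because $\mathcal{K}$ is compact and $\Gamma_{-}$ is open. Applying the divergence theorem on $\Omega$ (licit since $\partial\Omega$ is $C^1$ and $u$ is smooth) to $g=\psi|u|^p$ and then integrating in $\s$ yields
\[
\int_{\Gamma}(\s\cdot\eta)\,\psi\,|u|^p\,d\sigma=\int_{\D}\Big[(\s\cdot\nabla\psi)\,|u|^p+p\,\psi\,|u|^{p-2}u\,(\s\cdot\nabla u)\Big]\,dx\,d\s .
\]
Because $\psi|_{\Gamma}$ is supported in $\Gamma_{-}$, where $\s\cdot\eta<0$, the left-hand side equals $-\int_{\Gamma_{-}}|\s\cdot\eta|\,\psi\,|u|^p\,d\sigma$. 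This sign control is the reason for inserting $\psi$: the global, uncut identity only governs the difference $\int_{\Gamma_{+}}-\int_{\Gamma_{-}}$ and cannot isolate the inflow trace alone.

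It then remains to estimate the right-hand side. Using Young's inequality in the form $|u|^{p-1}|\s\cdot\nabla u|\le\frac{p-1}{p}|u|^p+\frac1p|\s\cdot\nabla u|^p$, together with $0\le\psi\le1$ and the uniform bound $|\s\cdot\nabla\psi|\le C_\psi$, I obtain
\[
\int_{\Gamma_{-}}|\s\cdot\eta|\,\psi\,|u|^p\,d\sigma\le (C_\psi+p-1)\,\|u\|_{L^p(\D)}^p+\|\s\cdot\nabla u\|_{L^p(\D)}^p .
\]
Since $\psi\equiv1$ and $|\s\cdot\eta|\ge c$ on $\mathcal{K}$, and the integrand on the left is nonnegative, the left-hand side dominates $c\int_{\mathcal{K}}|u|^p\,d\sigma$; dividing by $c$ gives the trace inequality with $C^p=(C_\psi+p-1)/c$. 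For $p=1$ the same computation applies, reading $|u|^{p-1}$ as $1$, and the case of a compact $\mathcal{K}\subset\Gamma_{+}$ is handled identically with the opposite sign.

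I expect the principal obstacle to be the density of $C_0^\infty(\overline{\D})$ in $W^p(\D)$, on which the continuous (and sequence-independent) extension rests; this is not a computation but a boundary-adapted mollification argument of Friedrichs type, for which I would appeal to \cite{DautrayLions93}. A lesser technical point is justifying the chain rule and the divergence theorem for $|u|^p$ when $p=1$, where $|u|$ is merely Lipschitz; this is settled routinely by replacing $|u|$ with $\sqrt{u^2+\varepsilon^2}$ and letting $\varepsilon\to0$.
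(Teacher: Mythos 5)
Your proposal is correct, and it actually supplies more than the paper does: the paper's entire ``proof'' of Proposition~\ref{pr1} is a citation to Theorem~1, p.~220 of Dautray--Lions, so you have in effect reconstructed the argument behind that reference rather than paralleled anything written in the text. Your mechanism --- compactness of $\mathcal{K}$ inside the open set $\Gamma_{-}$ giving $\s\cdot\eta\le -c<0$ there, a cutoff $\psi$ whose restriction to $\Gamma$ is supported in $\Gamma_{-}$ so that the divergence identity isolates the inflow term with a definite sign, Young's inequality to absorb the cross term, and division by $c$ --- is exactly the standard way such weighted trace estimates are obtained, and each step checks out (the constant should read $\max(C_\psi+p-1,1)/c$ rather than $(C_\psi+p-1)/c$, a triviality). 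Two points deserve flagging. First, the density of $C_0^\infty(\overline{\D})$ in $W^p(\D)$ is a genuine Friedrichs-mollification lemma, valid for $1\le p<\infty$ but not for $p=\infty$; you, like the paper, ultimately borrow it from Dautray--Lions, so your proof is self-contained only modulo that one ingredient (this is legitimate, not circular, since it is a separate lemma from the trace theorem). Second, since $\partial\Omega$ is only $C^1$, collar or distance-function constructions of the cutoff are delicate; it is cleaner to build $\psi$ from ambient product neighborhoods, i.e.\ cover $\mathcal{K}$ by finitely many sets $B(x_i,r)\times B_S(\s_i,r)$ whose closures meet $\Gamma$ inside $\Gamma_{-}$, take smooth bump functions in the ambient variables, and combine them; then $\psi$ is smooth up to the boundary, $0\le\psi\le1$, $\psi\equiv 1$ near $\mathcal{K}$, and the divergence theorem for $C^1$ fields on a $C^1$ domain applies, completing your argument.
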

\begin{proof}
The proof is given by \cite[Theorem~1, pp 220]{DautrayLions93}.
\end{proof}

\begin{remark}
The well-definition of the boundary condition~(\ref{eq:RTE_bc})
follows from the assumption that the support of $u_0$ is compact
embedding in $\Gamma_{-}$ and Proposition~\ref{pr1}.
\end{remark}

We will prove the next lemma in details, since similar techniques
will be used later. A similar result is given in \cite[Lemma~1, pp.
227]{DautrayLions93}

\begin{lemma}\label{lemma1-DL-v6}
The scattering operator
\begin{equation}
K u : = \mu_a \int_S \Theta (x, \s, \s') u(x,\s) d\s'
\end{equation}
is linear and continuous from $L^p(\D)$ in itself, for $ 1\leq p \leq \infty$.
\end{lemma}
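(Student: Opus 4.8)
The plan is to treat linearity as immediate and to devote the work to the norm estimate. Linearity of $K$ follows at once, since $u \mapsto \int_S \Theta(x,\s,\s')\,u(x,\s')\,d\s'$ is linear in $u$ and multiplication by $\mu_a$ is linear; the entire content of the lemma is therefore the bound $\norm[L^p(\D)]{Ku} \le C\,\norm[L^p(\D)]{u}$ with $C$ independent of $u$. The two structural facts I would lean on are the uniform estimate $\ul{\mu} \le \mu_a \le \ol{\mu}$ coming from membership in $D(F)$, and the property that $\Theta(x,\s,\cdot)$ is a nonnegative probability density on $S$, so that $\int_S \Theta(x,\s,\s')\,d\s' = 1$.

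For the endpoint $p = \infty$ I would argue pointwise: bounding $|\mu_a(x)|$ by $\ol{\mu}$ and pulling the essential supremum of $|u|$ out of the angular integral, the normalization $\int_S \Theta\,d\s' = 1$ gives $|Ku(x,\s)| \le \ol{\mu}\,\norm[\infty]{u}$, whence $\norm[\infty]{Ku} \le \ol{\mu}\,\norm[\infty]{u}$.

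For $1 \le p < \infty$ the cleanest route is Jensen's inequality in the $\s'$-variable with respect to the probability measure $\Theta(x,\s,\s')\,d\s'$. Since $t \mapsto |t|^p$ is convex and that measure has unit mass, I would obtain the pointwise bound $\bigl|\int_S \Theta\,u\,d\s'\bigr|^p \le \int_S \Theta\,|u|^p\,d\s'$, so that, after bounding $|\mu_a|$ by $\ol{\mu}$, one has $\norm[L^p(\D)]{Ku}^p \le \ol{\mu}^p \int_\Omega\!\int_S\!\int_S \Theta(x,\s,\s')\,|u(x,\s')|^p\,d\s'\,d\s\,dx$. Because the integrand is nonnegative, Tonelli's theorem permits interchanging the $\s$- and $\s'$-integrations and carrying out the $\s$-integral first; the remaining factor $\int_S \Theta(x,\s,\s')\,d\s$ — the integral of the phase function over its first angular argument — is controlled by a constant $C_\Theta$, leaving $\norm[L^p(\D)]{Ku}^p \le \ol{\mu}^p\,C_\Theta\,\norm[L^p(\D)]{u}^p$. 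Taking $p$-th roots closes the estimate uniformly in $p$.

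The step demanding the most care is precisely this interchange: it requires $\int_S \Theta(x,\s,\s')\,d\s$ to be finite and uniformly bounded in $(x,\s')$, a normalization over the \emph{other} angular slot than the one used for Jensen. This holds automatically when $\Theta$ is symmetric in $(\s,\s')$ — as it is for the standard phase functions depending only on $\s \cdot \s'$ — in which case $C_\Theta = 1$ and $K$ is the contraction $\ol{\mu}$ times a Markov averaging operator; without symmetry one must invoke the corresponding boundedness hypothesis on $\Theta$. I would also note that, having the endpoints $p = 1$ and $p = \infty$ in hand, the intermediate exponents could instead be recovered by Riesz--Thorin interpolation, but the Jensen--Tonelli argument already yields every $p$ in a single stroke.
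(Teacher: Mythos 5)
Your proposal is correct and follows essentially the same route as the paper: your Jensen step with respect to the probability measure $\Theta(x,\s,\s')\,d\s'$ is exactly the paper's H\"older factorization $\Theta = \Theta^{1/p'}\Theta^{1/p}$ applied pointwise, followed by the same Tonelli interchange and kernel normalization, with the endpoints $p=1,\infty$ handled directly in both arguments. The one place you are more careful than the printed proof is in flagging that the interchange requires $\int_S \Theta(x,\s,\s')\,d\s$ (integration over the \emph{first} angular slot) to be uniformly bounded --- a hypothesis the paper uses tacitly in its final inequality and which, as you note, holds with constant $1$ for symmetric phase functions depending only on $\s\cdot\s'$.
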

\begin{proof}
The linearity of $K$ follows immediately.

Since $\Theta$ is a probability kernel, it follows that $\Theta \geq
0$ and $\int_S \Theta(x,\s, \s') \leq 1$. Hence, given the uniformly
bounded of the coefficient, the assertion for $p = 1$ and $p =
\infty$ follows. Let´s consider the other cases. Using the
H\"{o}lder inequality ($1/p + 1/p' = 1$) we have
\begin{eqnarray*}
\|K u\|^p_{L^p(\D)} & = & \int_D \left| \mu_a \int_S \Theta(x,\s,
\s') u(x,\s') d\s'\right|^p dx d\s \\
& & \leq \overline{\mu}^p \int_D \left( \int_S \Theta(x,\s, \s')^{1
- 1/p} \Theta(x,\s, \s')^p
|u(x,\s')| d\s'\right)^p dx d\s \\
& & \leq \overline{\mu}^p \int_D \left( \int_S \Theta(x,\s, \s') d\s
\right)^{p/p'} \left( \int_S \Theta(x,\s, \s') |u(x,\s')|^p
d\s'\right) dx d\s \\
& & \leq \overline{\mu}^p \int_D \int_S \Theta(x,\s, \s')d\s\,
|u(x,\s')|^p  dx d\s' \leq \overline{\mu}^p \int_D |u(x,\s')|^p dx
d\s'\,.
\end{eqnarray*}
\end{proof}

Next we will present the regularity of solutions of the (RTE)
equation~(\ref{eq:RTE}).
\begin{theorem}\label{theo:4-DL,p241}
Let $q \in L^p(\D)$ and the coefficient $(\mu_a, \mu_s) \in D(F)$,
such that  $u_0 \in L^p(\Gamma_{-})$ (meaning that $u_0$ belong to
the spaces of a trace of a function $v \in W^p(\D)$ as in
Proposition~\ref{pr1}) . Then, there exists a unique $u \in L^p(\D)$
solution of (\ref{eq:RTE})-(\ref{eq:RTE_bc}), for $p \in [1,
\infty]$.

Moreover, we have the bound
$$
\|u\|_{L^p(\D)} \leq C \left( \|q\|_{L^p(\D)} +
\|u_0\|_{L^p(\Gamma_{-})} \right)\,,
$$
with $C$ depending only of the boundedness of the coefficients and
on $\D$.
\end{theorem}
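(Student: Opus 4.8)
The plan is to reformulate the RTE as a fixed-point/perturbation problem around the pure transport operator, and then invert the transport part explicitly along characteristics. Write the equation as $(\s\cdot\nabla + \mu_a + \mu_s)u - Ku = q$ with boundary data $u|_{\Gamma_-}=u_0$, where $K$ is the scattering operator. The leading operator $A u := (\s\cdot\nabla + \mu_a + \mu_s)u$ is a first-order transport operator with a nonnegative absorption term $\mu_a+\mu_s \ge 2\underline{\mu}>0$. First I would establish that $A$, with the inflow boundary condition, is invertible on $L^p(\D)$ and obtain an explicit a priori bound for $A^{-1}$. Then I would treat $Ku$ as a lower-order perturbation and either use a Neumann series or a contraction argument, exploiting that the absorption provides enough decay to dominate the scattering gain.

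The key computation is the solution of the transport equation $A v = f$, $v|_{\Gamma_-}=u_0$, by integrating along the characteristic lines $t\mapsto x - t\s$. For each fixed direction $\s$ and point $x\in\Omega$, let $\tau(x,\s)$ be the backward exit time to $\Gamma_-$; then
\begin{equation}\label{eq:char-sol}
v(x,\s) = u_0(x-\tau\s,\s)\,e^{-\int_0^\tau(\mu_a+\mu_s)(x-r\s)\,dr} + \int_0^\tau f(x-t\s,\s)\,e^{-\int_0^t(\mu_a+\mu_s)(x-r\s)\,dr}\,dt\,.
\end{equation}
Because $\mu_a+\mu_s \ge 2\underline{\mu}$, the exponential weights satisfy $e^{-\int_0^t(\mu_a+\mu_s)\,dr}\le e^{-2\underline{\mu}\,t}$, so the Duhamel integral is an integral operator with an exponentially decaying kernel. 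Taking $L^p$ norms and using Minkowski's integral inequality (or the $p=1,\infty$ endpoint estimates plus interpolation) yields $\|v\|_{L^p(\D)}\le C(\|f\|_{L^p(\D)}+\|u_0\|_{L^p(\Gamma_-)})$ with $C$ depending only on $\underline{\mu}$, $\overline{\mu}$ and the diameter of $\D$. This simultaneously proves that $A^{-1}$ exists and is bounded, and it delivers the trace term via Proposition~\ref{pr1}.

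Next I would rewrite $Au - Ku = q$ as $u = A^{-1}(Ku) + A^{-1}q =: \mathcal{B}u + g$. By Lemma~\ref{lemma1-DL-v6}, $K$ is bounded on $L^p(\D)$ with $\|K\|\le\overline{\mu}$, and combining with the bound on $A^{-1}$ shows $\mathcal{B}=A^{-1}K$ is bounded. To get existence, uniqueness, and the final estimate in one stroke, I would show $\mathcal{B}$ has spectral radius less than one, so that $(I-\mathcal{B})^{-1}=\sum_{k\ge0}\mathcal{B}^k$ converges in operator norm; the solution is then $u=(I-\mathcal{B})^{-1}g$ and the geometric series gives $\|u\|_{L^p(\D)}\le C\|g\|_{L^p(\D)}\le C(\|q\|_{L^p(\D)}+\|u_0\|_{L^p(\Gamma_-)})$, as claimed.

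The main obstacle is verifying the contraction/small-spectral-radius condition uniformly in $p$, since a naive bound gives $\|\mathcal{B}\|\le \overline{\mu}\,\|A^{-1}\|$, which need not be below one. The fix is to exploit the precise structure of the kernel in \eqref{eq:char-sol}: when $K$ is applied, the gain coefficient is $\mu_s$ (bounded by $\overline{\mu}$) while the loss in the exponent includes \emph{both} $\mu_a$ and $\mu_s$, and one should compare the total scattering-out against scattering-in so that the genuinely dissipative quantity is the absorption $\mu_a\ge\underline{\mu}>0$. Making this cancellation rigorous is cleanest at the endpoints $p=1$ and $p=\infty$ (where one integrates the kernel in $\s'$ or in $x$ and uses $\int_S\Theta\,d\s'\le1$ to see that scattering conserves mass up to the absorption deficit), and then I would recover the intermediate $p$ by the Riesz–Thorin interpolation theorem. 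This endpoint-plus-interpolation strategy is exactly the technique foreshadowed in the proof of Lemma~\ref{lemma1-DL-v6}, so I expect the authors to follow the same route.
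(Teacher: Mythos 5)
Your proposal is correct in outline, but it follows a genuinely different route from the paper. The paper does not construct the solution at all: its proof consists of verifying the two structural (subcriticality) inequalities $\mu_a+\mu_s-\mu_a\int_S\Theta \geq \underline{\mu}>0$ and $\mu_a\int_S\Theta \leq \beta(\mu_a+\mu_s)$ for some $\beta<1$ (the gain is written there with $\mu_a$, following Lemma~\ref{lemma1-DL-v6} rather than (\ref{eq:RTE}), but since both coefficients lie in $[\underline{\mu},\overline{\mu}]$ the inequality is the same either way), and then invoking Theorem~4, Proposition~5 and Proposition~6 of Chapter~XXI in \cite{DautrayLions93}, which give existence, uniqueness and the a priori bound for the absorbing boundary condition $u_0=0$ in the cases $1<p<\infty$, $p=1$ and $p=\infty$ respectively; the inhomogeneous datum is then handled by lifting $u_0$ and superposition, which is where Proposition~\ref{pr1} and the compact support of $u_0$ enter. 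Your argument --- explicit characteristic inversion of the attenuated transport operator $A$, Neumann series for $(I-A^{-1}K)^{-1}$, endpoint estimates at $p=1,\infty$ plus Riesz--Thorin --- is essentially a self-contained proof of the results the paper cites, and the cancellation you single out as the main obstacle (gain $\mu_s$ versus loss $\mu_a+\mu_s$, i.e. $\mu_s\leq\beta(\mu_a+\mu_s)$ with $\beta=\overline{\mu}/(\underline{\mu}+\overline{\mu})<1$, so the Duhamel kernel has mass at most $\beta\bigl(1-e^{-\int_0^\tau\sigma}\bigr)<1$) is precisely the hypothesis the paper checks before citing the reference; the two proofs rest on the same structural fact. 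What your approach buys: explicit constants, a formula that absorbs $u_0$ directly with no separate lifting step, and independence from the cited machinery. What the paper's approach buys: brevity, and it sidesteps two points your sketch leaves informal --- that every $W^p(\D)$ solution of the boundary value problem satisfies the mild (integral) equation, so that uniqueness transfers from the fixed-point formulation to the differential one, and that the $p=1$ endpoint contraction requires the kernel normalization $\int_S\Theta(\s,\s')\,d\s\leq 1$ in the \emph{first} angular variable (equivalently, running your $p=\infty$ argument on the adjoint problem), which holds for symmetric phase functions and is only implicitly assumed in the paper.
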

\begin{proof}
Since $(\mu_a, \mu_s) \in D(F)$ and $\int_S \Theta(x, \s, \s') \leq
1$, it follows that
\begin{eqnarray*}
\mu_a + \mu_s - \mu_a \int_S \Theta(x, \s, \s') & & \geq \mu_s \geq
\underline{\mu} > 0\, \qquad \mbox{and} \\
\mu_a \int_S \Theta(x, \s, \s') & & \leq \mu_a \leq \beta (\mu_a +
\mu_s) \,\,\mbox{ for some }\, 0\leq \beta < 1\,.
\end{eqnarray*}
Therefore, the assumptions on Theorem~4, Proposition~5 and
Proposition~6 in \cite[Chapter XXI]{DautrayLions93} are satisfied.
They guarantee, respectively, the existence of a unique solution $u
\in L^p(\D)$, with $1 < p < \infty$, $p=1$ and $p =\infty$ for
equation~(\ref{eq:RTE}) with absorbing boundary condition $(u_0=0)$,
satisfying $\|u\|_{L^p(\D)} \leq C \|q\|_{L^p(\D)}$\,.

Now, by using the lifting of the boundary condition $u_0$, the
linearity of (\ref{eq:RTE}) and the superposition principle, the
existences of a unique solution $u \in L^p(\Omega)$ for the
non-homogeneous boundary condition
equation~(\ref{eq:RTE})-(\ref{eq:RTE_bc}) follows from the absorbing
boundary condition results.
\end{proof}

Although the natural space of definition of the transport operator
$T$ in (\ref{eq:T}) is $L^1(\D)$, Theorem~\ref{theo:4-DL,p241} and
Lemma~\ref{lemma1-DL-v6} show that the transport operator is also
well defined in $W^p(\D)$, as long as $q, u_0$ belong to a enough
regular space. Moreover, we can see that the operator $T$ has an
adjoint in $L^{p'}(\D)$ given by
\begin{equation}\label{eq:T-adjoint}
T^* v = (- \s \cdot \nabla + \mu_a + \mu_s) v - \mu_s \int_S
\Theta(\s') v(\s') d\s'\,,
\end{equation}
such that the integro-differential equation
\begin{equation}
T^* v = \tilde{q}\,, \qquad   v|_{\Gamma_{+}} = g\
\end{equation}
has a unique solution in $L^{p'}(\D)$, for $1/p+ 1/p'= 1$, for any
$\tilde{q} \in L^{p'}(\Omega)$ and $g \in L^{p'}(\Gamma_{+})$ with
compact support. A detailed proof can be found in \cite[Section~
3.3]{DautrayLions93}.

\section{Properties of the forward operator}\label{Subsec:Forward}

In PAT, the nonlinear operator equation $D(F) \ni (\mu_a, \mu_s)
\longmapsto F(\mu_a, \mu_s)$ naturally maps the absorbed energy
given by equation~(\ref{eq:h}) into $L^1(\Omega)$. However, as we
showed in Theorem~\ref{theo:4-DL,p241}, the radiance $u$ may belong to
$L^p(\D)$ if source $q$ and the boundary condition $u_0$ are smooth
enough. It makes sense (from the numerical as well as from
the theoretical point of view) to looking for the operator equation
\begin{eqnarray}\label{eq:operator}
F\,:\, & \D(F) \longrightarrow L^p(\Omega)\qquad 1 \leq p \leq
\infty\,\nonumber\\
      &(\mu_a, \mu_s)
\longmapsto F(\mu_a, \mu_s)
\end{eqnarray}

In this section, we show the properties of the operator
equation~(\ref{eq:operator}) by considering $D (F) $ in different
topologies. Among those, we are interested in proving continuity,
compactness and Fr\'echet differentiability which allows to prove
convergence and stability of the different regularization approaches in
Section~\ref{sec:regularization}.

Let $(\mu_a, \mu_s)$ and $ (\tilde{\mu}_a, \tilde{\mu}_s) \in D(F)$ and $u
= u(\mu_a, \mu_s)$ and $v = u(\tilde{\mu}_a, \tilde{\mu}_s)$ the
respectively unique solutions of (\ref{eq:RTE})-(\ref{eq:RTE_bc}),
with source $q \in L^p(\D)$ and boundary condition $u_0 \in
L^p(\Gamma_{-})$. By linearity of (\ref{eq:RTE})-(\ref{eq:RTE_bc}),
we have that $ w = u - v$ satisfies
\begin{equation}\label{eq:RTE-w}
%(\s \cdot \nabla + \mu_a + \mu_s) w - \mu_s \int_S \Theta(\s, \s')
%w(\s') d\s'
%
T w = ( (\mu_a - \tilde{\mu}_a) + (\mu_s - \tilde{\mu}_s)) v + (
\mu_s -\tilde{\mu}_s) \int_S \Theta(\s, \s') v(\s') d\s'\,.
\end{equation}
with absorbing boundary condition. Notice that, from
Theorem~\ref{theo:4-DL,p241} and Lemma~\ref{lemma1-DL-v6} there
exists a unique solution $w \in L^p(\Omega)$ for the
integro-differential equation~(\ref{eq:RTE-w}).

Let us consider $p' =2$ for a while. Then, from the proof of
Theorem~4, pp. 241 in \cite{DautrayLions93}, we have that
\begin{eqnarray}\label{eq:coercivityL2}
\underline{\mu} \|w\|^2_{L^2(\D)} & & \leq \langle (\mu_a + \mu_s)
w - \mu_s \int_S \Theta(\s, \s') w(\s') d\s' , w \rangle_{L_2(\D)}\\
& & \leq \left\langle (\s \cdot \nabla + \mu_a + \mu_s) w - \mu_s
\int_S \Theta(\s, \s') w(\s') d\s' , w \right\rangle_{L_2(\D)}\,.
\nonumber
\end{eqnarray}

By multiplying equation~(\ref{eq:RTE-w}) by $w$, integrate over $\D$
in both sides and using (\ref{eq:coercivityL2}) we get that
\begin{eqnarray}\label{eq:eq}
\underline{\mu} \|w\|^2_{L^2(\D)} \leq & & \int_\D ( |\mu_a -
\tilde{\mu}_a| + |\mu_s - \tilde{\mu}_s|) |v|| w| dx d\s \\ & & +
\int_\D \left( | \mu_s -\tilde{\mu}_s| \int_S \Theta(\s, \s')
|v(\s')| d\s' \right) |w| dx d\s \,. \nonumber
\end{eqnarray}
Using the H\"{o}lder inequality for $1/p +1/r + 1/p' = 1$ and the
techniques on Lemma~\ref{lemma1-DL-v6}, it follows from
(\ref{eq:eq}) that
\begin{equation}\label{eq:eq1}
\underline{\mu} \|w\|^2_{L^2(\D)} \leq C \left( \|\mu_a -
\tilde{\mu}_a\|_{L^r(\Omega)} + \|\mu_s -
\tilde{\mu}_s\|_{L^r(\Omega)}\right) \|v\|^p_{L^p(\D)}
\|w\|_{L^2(\D)} \,.
\end{equation}

Now, from Theorem~\ref{theo:4-DL,p241} we conclude that
\begin{equation}\label{eq:eq2}
\|w\|_{L^2(\D)} \leq C (\|q\|, \|u_0\|) \left( \|\mu_a -
\tilde{\mu}_a\|_{L^r(\Omega)} + \|\mu_s -
\tilde{\mu}_s\|_{L^r(\Omega)}\right)\,.
\end{equation}

\begin{remark}\label{remark:coercivity-p'}
The coercivity of the bilinear form $\langle  Tw , w
\rangle_{L^{p'}(\D)}$ follows $ - \langle (\mu_a +
\mu_s) w - \mu_s \int_S \Theta(\s, \s') w(\s') d\s' , w
\rangle_{L^{p'}(\D)} \leq - \underline{\mu}\|w\|^{p'}_{L^{p'}(\D)}$
for $p'\in ]1, \infty[$. The proof is analogous to Theorem 4, pp.
241 \cite{DautrayLions93}, with the help of the H\"{o}lder´s
inequality replacing the Cauchy-Schwarz inequality in the case
$p'=2$.
\end{remark}

From Remark~\ref{remark:coercivity-p'} and using the same arguments
in equations~(\ref{eq:eq})-(\ref{eq:eq2}) for $1/p+ 1/r+ 1/p' = 1$,
we deduce that
\begin{equation}\label{eq:eq3}
\|w\|_{L^{p'}(\D)} \leq C (\|q\|_{L^p(\D)}, \|u_0\|_{L^p(\D)})
\left( \|\mu_a - \tilde{\mu}_a\|_{L^r(\Omega)} + \|\mu_s -
\tilde{\mu}_s\|_{L^r(\Omega)}\right)\,.
\end{equation}

\begin{remark}\label{remark:regularity}
Note that Equation~(\ref{eq:eq3}) reflects the amount of regularity
that we should expect on the coefficients and on the respective
solution of (\ref{eq:RTE}) in order to get continuity of the
forward operator in a particular space. In particular, from
Theorem~\ref{theo:4-DL,p241}, if the source and boundary conditions
are in $L^p(\D)$ and $ p \to \infty$, we can expect continuity
of the forward operator in $L^r(\Omega)$ for $r \to 1$.
\end{remark}

Indeed, we have the following Theorem.

\begin{theorem}\label{theo:continuity}
Let $p' \in ]1, \infty[$. Then the forward operator $F$ defined in
(\ref{eq:operator}) is continuous from $D(F)$ to $L^{p'}(\Omega)$,
with $D(F)$ consider in the $[L^r(\Omega)]^2$-topology, for $1/p +
1/p'+1/r = 1$.
\end{theorem}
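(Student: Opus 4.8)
The plan is to read off the continuity of $F$ from the Lipschitz bound~(\ref{eq:eq3}) already proved for the solution map $(\mu_a,\mu_s)\mapsto u$. Write $U=\int_S u\,d\s$ and $\tilde U=\int_S v\,d\s$ for the fluences attached to $(\mu_a,\mu_s)$ and $(\tilde\mu_a,\tilde\mu_s)$, so that $F(\mu_a,\mu_s)=\mu_a U$ and $F(\tilde\mu_a,\tilde\mu_s)=\tilde\mu_a\tilde U$. The first move is the algebraic splitting
\[
F(\mu_a,\mu_s)-F(\tilde\mu_a,\tilde\mu_s)=\mu_a(U-\tilde U)+(\mu_a-\tilde\mu_a)\tilde U\,,
\]
which isolates the part of the error carried by the perturbed radiance from the part carried by the perturbed absorption coefficient; it then suffices to bound each summand in $L^{p'}(\Omega)$. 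Throughout, $(p')'$ denotes the H\"older conjugate of $p'$, so that $1/p'+1/(p')'=1$.

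For the first summand I would transfer~(\ref{eq:eq3}) from $w=u-v$ to $U-\tilde U=\int_S w\,d\s$ by H\"older's inequality in the angular variable: since $S$ has finite measure, $|\int_S w\,d\s|^{p'}\le|S|^{p'-1}\int_S|w|^{p'}\,d\s$, whence $\|U-\tilde U\|_{L^{p'}(\Omega)}\le|S|^{1/(p')'}\|w\|_{L^{p'}(\D)}$. Combining the pointwise bound $\mu_a\le\overline\mu$ on $D(F)$ with~(\ref{eq:eq3}) then gives
\[
\|\mu_a(U-\tilde U)\|_{L^{p'}(\Omega)}\le C\bigl(\|\mu_a-\tilde\mu_a\|_{L^r(\Omega)}+\|\mu_s-\tilde\mu_s\|_{L^r(\Omega)}\bigr)\,,
\]
with $C$ depending only on $\overline\mu$, on $\|q\|_{L^p(\D)}$ and $\|u_0\|_{L^p(\Gamma_{-})}$, and on the measures of $\D$ and $S$.

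The second summand is where the constraint $1/p+1/p'+1/r=1$ is consumed. By Theorem~\ref{theo:4-DL,p241} the solution $v$ satisfies $\|v\|_{L^p(\D)}\le C(\|q\|_{L^p(\D)}+\|u_0\|_{L^p(\Gamma_{-})})$, and the same angular H\"older step yields $\tilde U\in L^p(\Omega)$ with $\|\tilde U\|_{L^p(\Omega)}\le C\|v\|_{L^p(\D)}$. Pairing $\mu_a-\tilde\mu_a\in L^r(\Omega)$ with $\tilde U\in L^p(\Omega)$ by H\"older in $x$, and using $1/r+1/p=1-1/p'=1/(p')'$, places the product in $L^{(p')'}(\Omega)$ with
\[
\|(\mu_a-\tilde\mu_a)\tilde U\|_{L^{(p')'}(\Omega)}\le\|\mu_a-\tilde\mu_a\|_{L^r(\Omega)}\,\|\tilde U\|_{L^p(\Omega)}\le C\,\|\mu_a-\tilde\mu_a\|_{L^r(\Omega)}\,.
\]

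It then remains to reconcile the exponent $(p')'$ produced by this pairing with the target exponent $p'$, and I expect this to be the genuine obstacle rather than any of the preceding estimates. Because $\Omega$ is bounded, $L^{(p')'}(\Omega)$ embeds continuously in $L^{p'}(\Omega)$ precisely when $(p')'\ge p'$ (that is, $p'\le2$), and the discrepancy is then absorbed into $C$; in the complementary regime I would instead retain the uniform bound $|\mu_a-\tilde\mu_a|\le\overline\mu-\underline\mu$ on $D(F)$ and couple it with $L^r$-convergence through a dominated-convergence argument to still conclude $(\mu_a-\tilde\mu_a)\tilde U\to0$ in $L^{p'}(\Omega)$. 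Adding the two contributions yields
\[
\|F(\mu_a,\mu_s)-F(\tilde\mu_a,\tilde\mu_s)\|_{L^{p'}(\Omega)}\le C\bigl(\|\mu_a-\tilde\mu_a\|_{L^r(\Omega)}+\|\mu_s-\tilde\mu_s\|_{L^r(\Omega)}\bigr)\,,
\]
which is exactly the asserted continuity (and a genuine Lipschitz estimate whenever $(p')'\ge p'$). The one point demanding care throughout is the bookkeeping of how the single relation $1/p+1/p'+1/r=1$ must be split between the angular integration producing $\tilde U$ and the spatial H\"older pairing, and the verification that the exponent it leaves behind can always be pushed into $L^{p'}(\Omega)$ using only the finiteness of $|\Omega|$ and $|S|$ together with the a priori $L^\infty$ bounds furnished by $D(F)$.
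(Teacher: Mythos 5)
Your proposal is correct where the paper's own proof is, and it is essentially the same argument: the same add-and-subtract splitting (you write $\mu_a(U-\tilde U)+(\mu_a-\tilde\mu_a)\tilde U$, while the paper uses the mirror image $(\mu_a-\tilde\mu_a)U+\tilde\mu_a(U-\tilde U)$ in (\ref{eq:4})), the same three-exponent H\"older pairing driven by $1/p+1/p'+1/r=1$, and the same appeal to the a priori bound (\ref{eq:eq3}) together with Theorem~\ref{theo:4-DL,p241} to control the radiance difference. The one place you genuinely depart from the printed proof is the final exponent bookkeeping, and there you are more careful than the paper: its display (\ref{eq:4}) invokes $\|u\|_{L^{p'}(\D)}$ although only $u\in L^p(\D)$ is granted, and the H\"older step behind it is justified from $L^p$ regularity on a bounded domain exactly when $p'\le 2$ --- the same threshold $(p')'\ge p'$ that you isolate; so your remark that the clean Lipschitz estimate is really a $p'\le 2$ statement identifies a genuine gloss in the paper, not an artifact of your splitting. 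One caveat on your $p'>2$ patch: the dominated-convergence argument needs the dominating function $(\overline{\mu}-\underline{\mu})^{p'}|\tilde U|^{p'}$ to be integrable, i.e. $\tilde U\in L^{p'}(\Omega)$, which does not follow from $v\in L^p(\D)$ when $p<p'$; it is, however, precisely the integrability needed for $F$ to take values in $L^{p'}(\Omega)$ at all, hence for the statement to be meaningful, and it holds in the regimes the paper actually exploits (e.g. $q,u_0\in L^\infty(\D)$ in Remark~\ref{remark:continuity} and Corollary~\ref{coro:continuity}). Make that hypothesis explicit (and recall that $L^r$ convergence gives a.e. convergence only along subsequences, so conclude via the sub-subsequence principle), and your proof is complete --- if anything, tighter than the published one.
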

\begin{proof}
As before, lets $(\mu_a, \mu_s), (\tilde{\mu}_a, \tilde{\mu}_s) \in
D(F)$ and $u, \tilde{u} \in L^p(\D)$ the respective solution of
(\ref{eq:RTE})-(\ref{eq:RTE_bc}) (from Theorem~\ref{theo:4-DL,p241},
the $L^p$ regularity of $u, \tilde{u}$ is reflected into the
regularity of the source and the boundary condition).

Notice that
\begin{eqnarray*}
F(\mu_a, \mu_s) - F(\tilde{\mu}_a, \tilde{\mu}_s) & & = \mu_a \int_S
u(\cdot; \s) d\s - \tilde{\mu}_a \int_S \tilde{u}(\cdot; \s) d\s \\
& & = (\mu_a - \tilde{\mu}_a)\int_S u(\cdot; \s) d\s - \tilde{\mu}_a
\int_S (\tilde{u}(\cdot; \s)  - u(\cdot, \s)) d\s\,. \nonumber
\end{eqnarray*}
Therefore, using the same arguments in the proof of
Lemma~\ref{lemma1-DL-v6}, we have that
\begin{eqnarray}\label{eq:4}
\int_\Omega & & \left| F(\mu_a, \mu_s)  - F(\tilde{\mu}_a,
\tilde{\mu}_s) \right|^{p'} dx \\ & & \leq \int_\Omega  \left(|\mu_a
- \tilde{\mu}_a|\int_S |u(\cdot; \s)| d\s + \overline{\mu} \int_S
|\tilde{u}(\cdot; \s)  - u(\cdot, \s)| d\s \right)^{p'}dx  \nonumber \\
& & \leq C \left( \|\mu_a - \tilde{\mu}_a\|^{r/p'}_{L^r(\Omega)}
\|u\|^{p'}_{L^{p'}(\D)} + \|\tilde{u} -
u\|^{p'}_{L^{p'}(\D)}\right)\,.\nonumber
\end{eqnarray}
Now, Theorem~\ref{theo:4-DL,p241} and the inequality~(\ref{eq:eq3})
conclude the assertion.
\end{proof}

\begin{remark}\label{remark:continuity}
There are some cases that we would like to point out in
Theorem~\ref{theo:continuity}.

\noindent \textbf{Case $p'=1$:} As we commented before $L^1(\Omega)$
is the natural topology for the fluence $U$ and so it is the natural
topology for the range of the operator $F$.

It is easy to see from equation~(\ref{eq:4}) that the
inequality
\begin{eqnarray}\label{eq:5}
\quad \int_\Omega \left| F(\mu_a, \mu_s) - F(\tilde{\mu}_a,
\tilde{\mu}_s) \right| dx  & \leq \|\mu_a -
\tilde{\mu}_a\|_{L^1(\Omega)} \|u\|_{L^\infty(\D)} + \overline{\mu}
\|\tilde{u} - u\|_{L^{1}(\D)}\,,
\end{eqnarray}
holds true, if the respective solution of equation~(\ref{eq:RTE}) -
(\ref{eq:RTE_bc}) is in $L^\infty(\D)$. Remember that, from
Theorem~\ref{theo:4-DL,p241}, a sufficient condition for the
$L^\infty$ regularity of a solution of (\ref{eq:RTE}) -
(\ref{eq:RTE_bc}) is that $q, u_0 \in L^\infty(\D)$. Since $\D$ is
bounded, we have $L^{s}(\D)$ is continuously embedding in $L^1(\D)$
and  $\|\cdot\|_{L^{1}(\D)} \leq C \|\cdot\|_{L^{s}(\D)}$, for any $
s \geq 1$. Using this fact in (\ref{eq:5}), for $s=p'$ w.r.t. the norm of
$u - \tilde{u}$ and $s = r$ w.r.t. the norm of the coefficient and
(\ref{eq:eq3}), we have that
\begin{eqnarray}
\int_\Omega \left| F(\mu_a, \mu_s) - F(\tilde{\mu}_a, \tilde{\mu}_s)
\right| dx  & \leq C \left( \|\mu_a - \tilde{\mu}_a\|_{L^r(\Omega)}
+ \|\mu_s - \tilde{\mu}_s\|_{L^r(\Omega)} \right)\,. \label{eq:6}
\end{eqnarray}

\noindent \textbf{Case $p'=2$:} For the numerical point of view is
important to have an inner product to help in the computational
implementation \cite{STCA13}.

A closer look at the proof of Theorem~\ref{theo:continuity} implies in
the last inequality that
\begin{eqnarray*}
\int_\Omega \left| F(\mu_a, \mu_s) - F(\tilde{\mu}_a, \tilde{\mu}_s)
\right|^2  dx  & \leq C \left (\|\mu_a -
\tilde{\mu}_a\|_{L^2(\Omega)} \|u\|_{L^2(\D)} + \|\tilde{u} -
u\|_{L^{2}(\D)} \right)\,,
\end{eqnarray*}
Now, we can use (\ref{eq:eq2}) to obtain that
\begin{eqnarray*}%%\label{eq:7}
\int_\Omega  | F(\mu_a, \mu_s)  - F(\tilde{\mu}_a, \tilde{\mu}_s)
|^2  dx  \leq C \left (\|\mu_a - \tilde{\mu}_a\|_{L^2(\Omega)} %+
%\|\mu_a - \tilde{\mu}_a\|_{L^r(\Omega)}
+ \|\mu_s - \tilde{\mu}_s\|_{L^2(\Omega)} \right)\,.
\end{eqnarray*}
Hence, for as long as we take $p = +\infty$ in the deduction of
(\ref{eq:eq2}) (this means that the respective solutions of
(\ref{eq:RTE}) - (\ref{eq:RTE_bc}) are in $L^\infty(\D)$), we have
the continuity of the operator $F$ in $L^2(\Omega)$ into itself.
\end{remark}

The next result shows the continuity of the operator $F$ for
piecewise constant coefficients.

\begin{corollary}\label{coro:continuity}
Assume that the admissible coefficient is a subset of $D(F)$ with piecewise
constant functions in $\Omega$ and $q, u_0 \in L^\infty(\D)$. Then the forward
operator $F\,:D(F) \to L^2(\Omega)$ defined in (\ref{eq:operator})
is continuous in $[L^1(\Omega)]^2$.
\end{corollary}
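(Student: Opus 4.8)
The plan is to bootstrap from the $p'=2$ estimate already obtained in the case $p'=2$ of Remark~\ref{remark:continuity}, rather than to invoke Theorem~\ref{theo:continuity} directly with the $[L^1(\Omega)]^2$-topology. Indeed, a direct application to the target space $L^2(\Omega)$ demands the H\"older balance $1/p + 1/2 + 1/r = 1$; insisting on $r=1$ would force $1/p = -1/2$, which is impossible, so the $L^1$-topology cannot be reached by exponent bookkeeping alone. Instead, I would start from the observation that $q, u_0 \in L^\infty(\D)$ places the radiances $u, \tilde{u}$ in $L^\infty(\D)$ by Theorem~\ref{theo:4-DL,p241} (the case $p=\infty$), so the computation of the case $p'=2$ in Remark~\ref{remark:continuity} applies verbatim and delivers
\[
\|F(\mu_a, \mu_s) - F(\tilde{\mu}_a, \tilde{\mu}_s)\|^2_{L^2(\Omega)} \leq C \left( \|\mu_a - \tilde{\mu}_a\|_{L^2(\Omega)} + \|\mu_s - \tilde{\mu}_s\|_{L^2(\Omega)} \right)\,,
\]
with $C$ depending only on $\|q\|_{L^\infty(\D)}$, $\|u_0\|_{L^\infty(\D)}$ and the coefficient bounds. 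This is continuity from $[L^2(\Omega)]^2$ into $L^2(\Omega)$, and it remains to upgrade the domain topology to $[L^1(\Omega)]^2$.

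The upgrade rests on the uniform boundedness built into $D(F)$: since $\underline{\mu} \leq \mu_a, \mu_s \leq \overline{\mu}$, any difference of admissible coefficients is bounded a.e. by $M := \overline{\mu} - \underline{\mu}$. Here I would use that, for piecewise constant coefficients (in fact for any uniformly bounded ones), this yields the elementary interpolation bound
\[
\|\mu_a - \tilde{\mu}_a\|^2_{L^2(\Omega)} = \int_\Omega |\mu_a - \tilde{\mu}_a|^2 \, dx \leq M \int_\Omega |\mu_a - \tilde{\mu}_a| \, dx = M \, \|\mu_a - \tilde{\mu}_a\|_{L^1(\Omega)}\,,
\]
and the analogue for $\mu_s$. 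Inserting $\|\mu_a - \tilde{\mu}_a\|_{L^2(\Omega)} \leq M^{1/2} \|\mu_a - \tilde{\mu}_a\|^{1/2}_{L^1(\Omega)}$ into the previous display and absorbing $M^{1/2}$ into the constant gives
\[
\|F(\mu_a, \mu_s) - F(\tilde{\mu}_a, \tilde{\mu}_s)\|^2_{L^2(\Omega)} \leq C \left( \|\mu_a - \tilde{\mu}_a\|^{1/2}_{L^1(\Omega)} + \|\mu_s - \tilde{\mu}_s\|^{1/2}_{L^1(\Omega)} \right)\,.
\]
As $(\tilde{\mu}_a, \tilde{\mu}_s) \to (\mu_a, \mu_s)$ in $[L^1(\Omega)]^2$ the right-hand side tends to zero, which is precisely the asserted continuity of $F : D(F) \to L^2(\Omega)$.

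The single genuine obstacle is the exponent constraint in Theorem~\ref{theo:continuity}, which forbids a direct passage to $r=1$ when $p'=2$; everything hinges on trading the $L^2$-norm of the coefficient differences for (a power of) their $L^1$-norm. That trade is legitimate only because the admissible class is uniformly bounded in $L^\infty(\Omega)$ --- in fact on such a set all $L^p$-modes of convergence with $1 \leq p < \infty$ coincide, so no information is really lost. The piecewise-constant hypothesis is what guarantees this boundedness is retained and, more to the point, singles out the regime in which $L^1$-convergence is the natural notion, e.g. for the level-set parametrizations of the coefficients exploited in Section~\ref{sec:regularization}.
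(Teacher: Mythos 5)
Your proposal is correct and is essentially the paper's own argument: both proofs use $q, u_0 \in L^\infty(\D)$ to take $p=\infty$ in the continuity estimate of Theorem~\ref{theo:continuity}/Remark~\ref{remark:continuity}, obtaining a bound on $\|F(\mu_a,\mu_s)-F(\tilde{\mu}_a,\tilde{\mu}_s)\|^2_{L^2(\Omega)}$ by an $L^r$-norm of the coefficient differences, and then exploit the uniform boundedness of admissible coefficients via the identical interpolation trick $\int_\Omega |\mu_a-\tilde{\mu}_a|^r\,dx \leq C\int_\Omega |\mu_a-\tilde{\mu}_a|\,dx$. The only difference is exponent bookkeeping: the paper detours through $p'>2$, hence $r = p'/(p'-1) = 1+s \in (1,2)$ (implicitly also needing $L^{p'}(\Omega)\hookrightarrow L^2(\Omega)$), whereas you stay at $p'=r=2$, which is marginally cleaner and makes explicit that only the $L^\infty$ bound built into $D(F)$ --- not piecewise constancy itself --- drives the argument.
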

\begin{proof}
Notice that, for the assumption on the source and boundary
conditions of equation~(\ref{eq:RTE})-(\ref{eq:RTE_bc}), we can take
$p = \infty$. Taking $p' > 2$ (and $p = +\infty$) and following the
same arguments in Remark~\ref{remark:continuity} we easily obtain that
\begin{eqnarray}\label{eq:8}
\int_\Omega \left| F(\mu_a, \mu_s) - F(\tilde{\mu}_a, \tilde{\mu}_s)
\right|^2  dx  & \leq C \left (\|\mu_a -
\tilde{\mu}_a\|_{L^r(\Omega)} + \|\mu_s -
\tilde{\mu}_s\|_{L^r(\Omega)} \right)\,,
\end{eqnarray}
for $r = p'/(p'-1) > 1$. Therefore, there exists a $s> 0$ such that
$ r = 1+ s$.

Without lost of generality, we assume that the coefficient has only
two distinct values, let say $\mu_a(x), \mu_s(x) \in \{c_1, c_2\}\,,
x\in \Omega$. Hence,
\begin{eqnarray*}
\int_\Omega |\mu_a - \tilde{\mu}_a|^r dx = \int_\Omega |\mu_a -
\tilde{\mu}_a| |\mu_a - \tilde{\mu}_a|^s dx \leq 2 \max\{c_1,
c_2\}^s \int_\Omega |\mu_a - \tilde{\mu}_a| dx\,.
\end{eqnarray*}
The same inequality is true for the scattering coefficient.
Therefore, the assertion follows.
\end{proof}

In the following we will prove that the inverse problem is ill-posed
in appropriated topologies.

\begin{theorem}\label{theo:compactness}
Assume that the solution of (\ref{eq:RTE}) - (\ref{eq:RTE_bc}) is
in $L^p(\D)$ (see Theorem~\ref{theo:4-DL,p241} for such conditions),
$r \in ]1, \infty[$ if $n=2$ or $ r < 6$ if $n=3$ and $1/p +
1/p'+1/r = 1$. Moreover, let the operator $F\, : \, D(F) \to
L^{p'}(\Omega)$ as defined in (\ref{eq:operator}), with $D(F)$
equipped with the $[H^1(\Omega)]^2$-norm. Then $F$ is completely
continuous and weak sequentially closed in $L^{p'}(\Omega)$.
\end{theorem}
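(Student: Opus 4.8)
The plan is to reduce both assertions to the continuity already established in Theorem~\ref{theo:continuity}, with the single new ingredient being the compactness of the Sobolev embedding. The crucial observation is that the stated hypotheses on $r$ are precisely the Rellich--Kondrachov thresholds for $H^1 = W^{1,2}$: since $\partial\Omega$ is $\mathcal{C}^1$, for $n=2$ (where $2=n$) the embedding $H^1(\Omega)\hookrightarrow L^r(\Omega)$ is compact for every $r\in ]1,\infty[$, while for $n=3$ the critical exponent is $2^\ast = 2n/(n-2)=6$, so the embedding is compact exactly for $r<6$. The embedding applies componentwise to $[H^1(\Omega)]^2\hookrightarrow [L^r(\Omega)]^2$.

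For complete continuity I would take a sequence $(\mu_a^k,\mu_s^k)\in D(F)$ converging weakly in $[H^1(\Omega)]^2$ to some $(\mu_a,\mu_s)$. By the compact embedding, this weak convergence upgrades to strong convergence $(\mu_a^k,\mu_s^k)\to(\mu_a,\mu_s)$ in $[L^r(\Omega)]^2$. Theorem~\ref{theo:continuity}, which gives continuity of $F$ from $D(F)$ in the $[L^r(\Omega)]^2$-topology into $L^{p'}(\Omega)$ under exactly the relation $1/p+1/p'+1/r=1$, then yields $F(\mu_a^k,\mu_s^k)\to F(\mu_a,\mu_s)$ strongly in $L^{p'}(\Omega)$. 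This is precisely complete continuity: weakly convergent inputs are mapped to strongly convergent outputs.

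Before invoking Theorem~\ref{theo:continuity} I must confirm that the weak limit $(\mu_a,\mu_s)$ still belongs to $D(F)$. This follows because $D(F)$ is defined by the pointwise bounds $\underline{\mu}\le \mu_a,\mu_s\le\overline{\mu}$, which describe a convex set closed in $[L^r(\Omega)]^2$; passing to a subsequence converging almost everywhere preserves these bounds, so $(\mu_a,\mu_s)\in D(F)$. With this, weak sequential closedness is immediate: if $(\mu_a^k,\mu_s^k)\rightharpoonup(\mu_a,\mu_s)$ in $[H^1(\Omega)]^2$ and $F(\mu_a^k,\mu_s^k)\rightharpoonup g$ in $L^{p'}(\Omega)$, then the complete continuity just proved forces $F(\mu_a^k,\mu_s^k)\to F(\mu_a,\mu_s)$ strongly, hence weakly; uniqueness of weak limits gives $g=F(\mu_a,\mu_s)$, and since $(\mu_a,\mu_s)\in D(F)$ the graph is closed.

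The only genuinely delicate point is matching the stated conditions on $r$ with the Rellich--Kondrachov thresholds and checking that the exponent constraint $1/p+1/p'+1/r=1$ remains compatible with a choice of $p$ guaranteeing $L^p(\D)$-regularity of the radiance through Theorem~\ref{theo:4-DL,p241}. Once the compact embedding is in place, the remainder is a formal consequence of the previously established continuity, and I expect no serious obstacle beyond the bookkeeping of these exponent relations.
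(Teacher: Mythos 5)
Your proof is correct and follows essentially the same route as the paper: the compact embedding $H^1(\Omega)\hookrightarrow L^r(\Omega)$ (Rellich--Kondrachov, with exactly the stated thresholds) upgrades weak $[H^1(\Omega)]^2$-convergence to strong $[L^r(\Omega)]^2$-convergence, after which Theorem~\ref{theo:continuity} gives strong convergence of the images in $L^{p'}(\Omega)$. The only cosmetic difference is that the paper deduces $(\mu_a,\mu_s)\in D(F)$ from convexity and closedness (hence weak closedness) of $D(F)$, whereas you use a.e.\ convergence of a subsequence; both work, and you additionally spell out the weak sequential closedness step that the paper's proof leaves implicit.
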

\begin{proof}
Let $\{(\mu^k_a, \mu^k_s)\}$ be a sequence in $D(F)$ weakly
convergent to $(\mu_a, \mu_s)$. Since $D(F)$ is convex and closed,
it is weakly closed. Hence the weak limit $(\mu_a, \mu_s) \in D(F)$.
Since $H^1(\Omega)$ is compact embedding in $L^r(\Omega)$ for $r$ as
in the assumption \cite{Ada75}, there exist a subsequence (that we
denote with the same index) that strongly converges in $L^r(\Omega)$.
From Theorem~\ref{theo:continuity}, we have $F(\mu^k_a, \mu^k_s) \to
F(\mu_a, \mu_s)$ in $L^{p'}(\Omega)$.
\end{proof}

\begin{remark}
The assertions of Corollary~(\ref{coro:continuity}) remained true
for $D(F)$ embedding in any space that is compact embedding in
$[L^r(\Omega)]^2$. In particular, since $\bv$ is compact embedding
in $L^r(\Omega)$ for $1 \leq r \leq 3/2$ (see \cite{EG92}), we can
consider $D(F)$ with the $[\bv]^2$-norm.
\end{remark}

We will see that the presented results on continuity and compactness
allowed us to prove regularizing properties of approximate solutions
for the inverse problem in Section~\ref{sec:regularization}. Let us
move forward and prove the differentiability of the forward
operator in suitable topologies. Differentiability is a key property
for the convergence of the iterative algorithm employed to obtain
the approximated solution of the nonlinear operator
equation~(\ref{eq:operator}).

\begin{theorem}\label{theo:dir-diff}
Let $(\mu_a, \mu_s) \in D(F)$ and $(\t \mu_a, \t \mu_s) \in
[H^1(\Omega)]^2$ such that $( \mu_a + t \t \mu_a, \mu_s + t \t
\mu_s) \in D(F)$ for $t \in \mathcal{R}$ with $|t|$ sufficiently
small. Then the directional derivative of $F$ in the direction $(\t
\mu_a, \t \mu_s)$ is given by
\begin{eqnarray}\label{eq:dir-derivative}
F'(\mu_a, \mu_s)[ \t \mu_a, \t \mu_s] = \t \mu_a U(\mu_a, \mu_s) +
\mu_a \int_S u'(\mu_a, \mu_s; \s)[\t \mu_a, \t \mu_s] d\s
\end{eqnarray}
where $u'(\mu_a, \mu_s; \s)$ satisfies the integro-differential
equation
\begin{eqnarray}\label{eq:u'}
T u' = - [ \t \mu_a + \t \mu_s ] u + \t \mu_s \int_S \Theta(\s, \s')
u(\s') d\s'\,,
\end{eqnarray}
with absorbing boundary conditions, and $u$ denotes the unique
solution of (\ref{eq:RTE})-(\ref{eq:RTE_bc}).
\end{theorem}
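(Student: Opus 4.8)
The plan is to compute the directional derivative straight from the definition, as the limit of the difference quotient. Write $u_t := u(\mu_a + t\t\mu_a, \mu_s + t\t\mu_s)$ and $u := u(\mu_a, \mu_s)$ for the solutions of (\ref{eq:RTE})--(\ref{eq:RTE_bc}) associated with the perturbed and unperturbed coefficients (both carrying the same source $q$ and boundary datum $u_0$). Since $F(\mu_a, \mu_s) = \mu_a \int_S u\, d\s$, the difference quotient splits as
\[
\frac{F(\mu_a + t\t\mu_a, \mu_s + t\t\mu_s) - F(\mu_a, \mu_s)}{t} = \t\mu_a \int_S u_t\, d\s + \mu_a \int_S \frac{u_t - u}{t}\, d\s.
\]
The first term will converge to $\t\mu_a\, U(\mu_a,\mu_s)$ once I establish $u_t \to u$, so the analysis concentrates on the second term.

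First I would derive the governing equation for $w_t := u_t - u$. Subtracting the two copies of (\ref{eq:RTE}) and using the linearity of $T$ gives
\[
T w_t = -t(\t\mu_a + \t\mu_s) u_t + t\t\mu_s \int_S \Theta(\s, \s') u_t(\s')\, d\s',
\]
with absorbing boundary condition, since the datum $u_0$ cancels. Dividing by $t$, the quotient $w_t/t$ solves this transport equation with right-hand side $-(\t\mu_a + \t\mu_s) u_t + \t\mu_s \int_S \Theta u_t\, d\s'$. Comparing with (\ref{eq:u'}), I would set $z_t := w_t/t - u'$ and, subtracting once more, obtain
\[
T z_t = -(\t\mu_a + \t\mu_s)(u_t - u) + \t\mu_s \int_S \Theta(\s, \s')(u_t - u)(\s')\, d\s',
\]
again with absorbing boundary condition, so that Theorem~\ref{theo:4-DL,p241} applies to $z_t$.

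The crux is to show $z_t \to 0$ in $L^{p'}(\D)$ as $t \to 0$. From the continuity estimate (\ref{eq:eq3}) applied to the coefficient perturbation $t(\t\mu_a, \t\mu_s)$, one has $\|w_t\|_{L^{p'}(\D)} \leq C t(\|\t\mu_a\|_{L^r(\Omega)} + \|\t\mu_s\|_{L^r(\Omega)})$, so in particular $\|u_t - u\|_{L^{p'}(\D)} \to 0$. Applying the a priori bound of Theorem~\ref{theo:4-DL,p241} to the equation for $z_t$ and estimating its right-hand side with the H\"older inequality exactly as in Lemma~\ref{lemma1-DL-v6} and Theorem~\ref{theo:continuity} (with the split $1/p + 1/r + 1/p' = 1$ and the embedding $H^1(\Omega) \hookrightarrow L^r(\Omega)$) yields $\|z_t\|_{L^{p'}(\D)} \leq C(\|\t\mu_a\|_{L^r(\Omega)} + \|\t\mu_s\|_{L^r(\Omega)}) \|u_t - u\|_{L^{p'}(\D)} \to 0$. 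This is the step I expect to be the main obstacle: matching the H\"older exponents so that the transport estimate closes, and confirming that $u_t$ remains uniformly bounded in the relevant norm, which follows from the $t$-independent constant in Theorem~\ref{theo:4-DL,p241}.

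Finally, I would pass to the limit $t \to 0$ in the difference quotient. The convergence $z_t \to 0$ gives $w_t/t \to u'$ in $L^{p'}(\D)$, while the continuity $u_t \to u$ lets me replace $\int_S u_t\, d\s$ by $\int_S u\, d\s = U(\mu_a, \mu_s)$ in the first term. Combining these yields
\[
F'(\mu_a, \mu_s)[\t\mu_a, \t\mu_s] = \t\mu_a U(\mu_a, \mu_s) + \mu_a \int_S u'(\mu_a, \mu_s; \s)[\t\mu_a, \t\mu_s]\, d\s,
\]
which is the claimed formula (\ref{eq:dir-derivative}).
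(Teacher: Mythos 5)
Your route is the same one the paper takes: the paper's entire proof consists of defining $u'$ as the limit of the difference quotient $\frac{1}{t}\bigl(u(\mu_a + t\t\mu_a,\mu_s + t\t\mu_s) - u(\mu_a,\mu_s)\bigr)$, asserting ``by linearity'' that this limit satisfies (\ref{eq:u'}), and concluding from linearity and continuity of multiplication by $\mu_a$. Your proposal is the honest, fleshed-out version of that sketch (the equation for $w_t$, the comparison function $z_t$, the stability estimates), so there is no divergence of method — you are supplying exactly the analysis the paper omits.

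There is, however, a genuine flaw in your closing estimate, located precisely at the step you flagged as the main obstacle. The right-hand side of the $z_t$ equation is a product of an $L^r$ function with $u_t - u$ (note $\t\mu_a, \t\mu_s \in H^1(\Omega) \hookrightarrow L^r(\Omega)$ only for \emph{finite} $r$, since $H^1 \not\hookrightarrow L^\infty$ for $n = 2,3$). Pairing against $z_t$, i.e.\ the coercivity argument behind Theorem~\ref{theo:4-DL,p241} and (\ref{eq:eq3}), then forces $u_t - u$ to be measured in $L^p(\D)$: under the split $1/p + 1/r + 1/p' = 1$ the exponents you actually use, $1/r + 1/p' + 1/p' = 1 + (1/p' - 1/p)$, sum to more than one whenever $p > p'$, which is the case here. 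But the preceding step, via (\ref{eq:eq3}), only controls the \emph{weaker} norm $\|u_t - u\|_{L^{p'}(\D)} = O(|t|)$, so the inequality $\|z_t\|_{L^{p'}} \leq C(\|\t\mu_a\|_{L^r} + \|\t\mu_s\|_{L^r})\|u_t - u\|_{L^{p'}}$ does not follow from H\"older, and the chain does not close as written. The gap is fixable: assume $q, u_0 \in L^\infty(\D)$ (as the paper does in Corollary~\ref{coro:continuity} and Remark~\ref{remark:continuity}), obtain from Theorem~\ref{theo:4-DL,p241} the $t$-uniform bound $\|u_t - u\|_{L^\infty(\D)} \leq C$, and interpolate, $\|u_t - u\|_{L^p(\D)} \leq \|u_t - u\|_{L^{p'}(\D)}^{p'/p}\,\|u_t - u\|_{L^\infty(\D)}^{1 - p'/p} = O(|t|^{p'/p})$; alternatively, re-apply the stability estimate (\ref{eq:eq3}) with the exponent triple $(\infty,\,p/(p-1),\,p)$ in place of $(p,r,p')$ to get $\|u_t - u\|_{L^p(\D)} = O(|t|)$ directly. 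The same repair is needed for your first term, since $\|\t\mu_a \int_S (u_t - u)\,d\s\|_{L^{p'}(\Omega)}$ is likewise bounded by $\|\t\mu_a\|_{L^r}$ times an $L^a$-norm of $u_t - u$ with $a > p'$, not by its $L^{p'}$-norm. With this interpolation step inserted, your proof is complete.
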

\begin{proof}
By linearity of equation~(\ref{eq:RTE}), it follows that the
directional derivative $u'(\mu_a, \mu_s; \s)[\t \mu_a, \t \mu_s]: =
\lim_{t \to 0} \frac{1}{t} (u( \mu_a + t \t \mu_a, \mu_s + t \t
\mu_s) - u(\mu_a, \mu_s))$ satisfies (\ref{eq:u'}).

Now, the linearity and continuity of the multiplication for $\mu_a$
in the definition of $F$ imply the assertion.
\end{proof}

\begin{lemma}
The directional derivative $F'(\mu_a, \mu_s)$ defined in
(\ref{eq:dir-derivative}) satisfies the uniform estimate
\begin{eqnarray}\label{eq:unif-bound}
\|F'(\mu_a, \mu_s) & & [ \t \mu_a, \t \mu_s]\|_{L^2(\Omega)} \\
& & \leq C (\|\t \mu_a\|_{H^1(\Omega)} + \|\t
\mu_s\|_{H^1(\Omega)})(\|q\|_{L^2(\D)} + \|u_0\|_{L^2(\D)}) \,,
\nonumber
\end{eqnarray}
where the constant $C$ depends only on $\D$ and the bounds of the
coefficients.
\end{lemma}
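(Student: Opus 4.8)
The plan is to split the directional derivative (\ref{eq:dir-derivative}) into its two constituent terms, $\t \mu_a\, U(\mu_a,\mu_s)$ and $\mu_a \int_S u'(\mu_a,\mu_s;\s)\,d\s$, and to estimate each separately in $L^2(\Omega)$. For the first term I would use H\"older's inequality in $x$ together with the Sobolev embedding $H^1(\Omega)\hookrightarrow L^q(\Omega)$ (valid for every $q<\infty$ when $n=2$ and for $q\le 6$ when $n=3$), writing $\|\t\mu_a U\|_{L^2(\Omega)}\le \|\t\mu_a\|_{L^q(\Omega)}\,\|U\|_{L^{q'}(\Omega)}$ and controlling $\|U\|_{L^{q'}(\Omega)}$ by a norm of $u$ through $U=\int_S u\,d\s$ and Cauchy--Schwarz in the angular variable, as in Lemma~\ref{lemma1-DL-v6}. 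For the second term I would bound $\mu_a\le\ol\mu$ pointwise and apply Cauchy--Schwarz in $\s$ on the finite-measure sphere $S$, obtaining $\|\mu_a\int_S u'\,d\s\|_{L^2(\Omega)}\le \ol\mu\,|S|^{1/2}\,\|u'\|_{L^2(\D)}$, which reduces the whole estimate to a bound on $\|u'\|_{L^2(\D)}$.

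To control $\|u'\|_{L^2(\D)}$ I would reuse the coercivity argument already employed in (\ref{eq:coercivityL2}): multiplying the transport equation (\ref{eq:u'}) by $u'$ and integrating over $\D$ gives $\underline\mu\,\|u'\|^2_{L^2(\D)}\le \langle Tu',u'\rangle_{L^2(\D)}=\langle g,u'\rangle_{L^2(\D)}\le \|g\|_{L^2(\D)}\,\|u'\|_{L^2(\D)}$, where $g=-[\t\mu_a+\t\mu_s]\,u+\t\mu_s\int_S\Theta(\s,\s')u(\s')\,d\s'$ is the source on the right-hand side of (\ref{eq:u'}); hence $\|u'\|_{L^2(\D)}\le\underline\mu^{-1}\|g\|_{L^2(\D)}$. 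It then remains to estimate $\|g\|_{L^2(\D)}$. The scattering contribution is controlled by a norm of $u$ in the same scale via Lemma~\ref{lemma1-DL-v6}, while the two products $\t\mu_a u$ and $\t\mu_s u$ are handled exactly as the first term: applying H\"older in $x$ to $\int_\Omega|\t\mu_a|^2\int_S|u|^2\,d\s\,dx$ and invoking $H^1(\Omega)\hookrightarrow L^q(\Omega)$ gives $\|\t\mu_a u\|_{L^2(\D)}\le C\,\|\t\mu_a\|_{H^1(\Omega)}\,\|u\|_{L^p(\D)}$ for a suitable exponent $p$. Collecting the pieces yields $\|g\|_{L^2(\D)}\le C\,(\|\t\mu_a\|_{H^1(\Omega)}+\|\t\mu_s\|_{H^1(\Omega)})\,\|u\|_{L^p(\D)}$.

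Finally I would invoke Theorem~\ref{theo:4-DL,p241} to absorb the radiance norm into the data, $\|u\|_{L^p(\D)}\le C(\|q\|+\|u_0\|)$, and combine this with the two term-wise estimates to reach (\ref{eq:unif-bound}); the constant $C$ then depends only on $\D$, on $\ul\mu,\ol\mu$ and on $|S|$, as claimed. The \emph{main obstacle} is precisely the bilinear product of the coefficient perturbation with the radiance, $\t\mu_a u$, in $L^2(\D)$: since $\t\mu_a$ is only assumed in $H^1(\Omega)$ it is not bounded, so one cannot simply factor out an $L^\infty$ norm, and the estimate must rest on the Sobolev embedding --- which is where the dimensional restriction $n\in\{2,3\}$ enters, and where, as already noted in the $p'=2$ discussion of Remark~\ref{remark:continuity}, one is driven to take the radiance in a sufficiently high $L^p(\D)$ (ultimately $L^\infty(\D)$, i.e.\ $q,u_0\in L^\infty(\D)$) in order for the right-hand side of (\ref{eq:unif-bound}) to be expressible through the stated data norms alone.
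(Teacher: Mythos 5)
Your proposal is correct and is essentially the paper's own argument: the paper's entire proof is the single line that the result ``follows similarly to Theorem~\ref{theo:continuity} and Remark~\ref{remark:continuity}'', i.e., exactly the route you spell out --- decompose the derivative as in (\ref{eq:dir-derivative}), apply the coercivity estimate (\ref{eq:coercivityL2}) to the perturbation equation (\ref{eq:u'}) (which plays the role of (\ref{eq:RTE-w})), bound the products $\t\mu_a u$, $\t\mu_s u$ by H\"older together with the Sobolev embedding $H^1(\Omega)\hookrightarrow L^r(\Omega)$, and close with Theorem~\ref{theo:4-DL,p241}. Your concluding caveat is also well taken: the exponent bookkeeping genuinely forces the data $q$, $u_0$ into $L^p(\D)$ with $p>2$ (indeed $p=\infty$ in the paper's own Remark~\ref{remark:continuity}, case $p'=2$), so the $L^2(\D)$ data norms appearing in the lemma's statement are a sloppiness of the paper that its one-line proof glosses over rather than a defect of your argument.
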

\begin{proof}
The result follows similarly to Theorem~\ref{theo:continuity} and
Remark~\ref{remark:continuity}.
\end{proof}

Since $D(F)$ has no interior point in the
$[H^1(\Omega)]^2$-topology, the directional derivative is not
Gateaux differentiable. However, we will prove that it defines a
linear operator that can be extended continuously to
$[H^1(\Omega)]^2$.

\begin{theorem}
Under the assumptions of Theorem~\ref{theo:dir-diff},  $
F'(\mu_a, \mu_s)[ \t \mu_a, \t \mu_s]$ has a linear and bounded
extension to $[H^1(\Omega)]^2$.
\end{theorem}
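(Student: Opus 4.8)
The plan is to produce the extension explicitly from the defining formula~(\ref{eq:dir-derivative})--(\ref{eq:u'}) rather than by a density argument, since the right-hand side of~(\ref{eq:u'}) makes perfectly good sense for an \emph{arbitrary} direction and not only for admissible ones. Fix $(\mu_a,\mu_s)\in D(F)$ with associated solution $u\in L^p(\D)$, and let $(\t\mu_a,\t\mu_s)\in[H^1(\Omega)]^2$ be arbitrary. I would first check that the source
$$ g[\t\mu_a,\t\mu_s]:= -[\t\mu_a+\t\mu_s]\,u + \t\mu_s\int_S\Theta(\s,\s')u(\s')\,d\s' $$
belongs to the space $L^{p'}(\D)$ in which Theorem~\ref{theo:4-DL,p241} applies. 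This is exactly the product bookkeeping already carried out in Theorems~\ref{theo:continuity} and~\ref{theo:compactness}: by the Sobolev embedding $H^1(\Omega)\hookrightarrow L^r(\Omega)$ (valid for the range of $r$ in Theorem~\ref{theo:compactness}) together with $u\in L^p(\D)$ and H\"older's inequality with $1/p+1/r+1/p'=1$, the products $\t\mu_a u$ and $\t\mu_s u$ lie in $L^{p'}(\D)$, while the scattering term is controlled by Lemma~\ref{lemma1-DL-v6}. Hence Theorem~\ref{theo:4-DL,p241} yields a unique $u'=u'[\t\mu_a,\t\mu_s]\in L^{p'}(\D)$ solving $Tu'=g$ with absorbing boundary condition, and I set
$$ L[\t\mu_a,\t\mu_s]:=\t\mu_a\,U(\mu_a,\mu_s)+\mu_a\int_S u'[\t\mu_a,\t\mu_s]\,d\s\,. $$

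Next I would verify that $L$ is linear on all of $[H^1(\Omega)]^2$. The assignment $(\t\mu_a,\t\mu_s)\mapsto g[\t\mu_a,\t\mu_s]$ is manifestly linear, and since $T$ is linear with unique solvability by Theorem~\ref{theo:4-DL,p241}, the solution map $g\mapsto u'$ is linear; composing with the linear map $(\t\mu_a,\t\mu_s)\mapsto\t\mu_a U+\mu_a\int_S u'\,d\s$ shows $L$ is linear. For boundedness I would invoke the uniform estimate~(\ref{eq:unif-bound}) of the preceding lemma: its proof only uses the structure of~(\ref{eq:u'}) and the continuity bounds~(\ref{eq:eq2})--(\ref{eq:eq3}), and never the admissibility of the direction, so it applies verbatim to every $(\t\mu_a,\t\mu_s)\in[H^1(\Omega)]^2$ and gives
$$ \|L[\t\mu_a,\t\mu_s]\|_{L^2(\Omega)}\le C\big(\|\t\mu_a\|_{H^1(\Omega)}+\|\t\mu_s\|_{H^1(\Omega)}\big)\big(\|q\|_{L^2(\D)}+\|u_0\|_{L^2(\D)}\big)\,. $$
Thus $L$ is a bounded linear operator from $[H^1(\Omega)]^2$ into $L^2(\Omega)$.

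Finally, $L$ is genuinely an extension of the directional derivative: whenever $(\t\mu_a,\t\mu_s)$ is admissible, Theorem~\ref{theo:dir-diff} identifies $F'(\mu_a,\mu_s)[\t\mu_a,\t\mu_s]$ with precisely the value~(\ref{eq:dir-derivative}) built from the solution of~(\ref{eq:u'}), which by uniqueness is the same $u'$ used to define $L$; hence $L$ and $F'(\mu_a,\mu_s)$ coincide on the admissible cone. I expect the only delicate point to be the first step --- confirming that $g[\t\mu_a,\t\mu_s]\in L^{p'}(\D)$ for arbitrary directions, so that the source-term hypothesis of Theorem~\ref{theo:4-DL,p241} is met --- since this is where the Sobolev-exponent restrictions on $r$ (hence on the admissible pair $(p,p')$) enter; everything downstream is a direct transcription of estimates already established. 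If one additionally wants the extension to be \emph{unique}, it would follow from continuity together with density of the admissible cone in $[H^1(\Omega)]^2$, but for the existence asserted here the explicit construction above is enough.
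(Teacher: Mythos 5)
Your construction is correct, and it takes a genuinely different route from the paper's proof. The paper proceeds by density and continuous extension: it considers the ball $B_\rho(\mu_a,\mu_s)\subset[H^1(\Omega)]^2$, asserts that $B_\rho(\mu_a,\mu_s)\cap D(F)$ is dense in $B_\rho(\mu_a,\mu_s)$ in the $H^1$-topology, observes that on this dense set the directional derivatives obey the uniform bound~(\ref{eq:unif-bound}), and then extends by continuity (the paper attributes this to the uniform boundedness principle, though the tool actually needed is the extension theorem for densely defined bounded linear operators). You instead build the extension explicitly: for an \emph{arbitrary} direction $(\t \mu_a, \t \mu_s)\in[H^1(\Omega)]^2$ the right-hand side of~(\ref{eq:u'}) is still an admissible source for Theorem~\ref{theo:4-DL,p241}, because the estimates require only $L^r$ control of the direction, which the Sobolev embedding supplies; hence the linearized transport equation is uniquely solvable, formula~(\ref{eq:dir-derivative}) defines a linear operator on all of $[H^1(\Omega)]^2$, boundedness follows from~(\ref{eq:unif-bound}) (whose derivation indeed never uses admissibility of the direction, only coercivity at the base point and $L^r$ norms of the direction), and agreement with the one-sided derivative on the admissible cone follows from uniqueness. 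Your route buys real robustness: it bypasses the paper's density claim, which is false as literally stated --- $D(F)$ is convex and closed and, as the paper itself notes, has empty interior in $[H^1(\Omega)]^2$, so it is nowhere dense and $B_\rho(\mu_a,\mu_s)\cap D(F)$ cannot be dense in $B_\rho(\mu_a,\mu_s)$; what that argument actually needs is density of the cone of admissible \emph{directions}, which holds when the base point stays strictly inside the pointwise bounds (bounded $H^1$ directions are then admissible, and $L^\infty(\Omega)\cap H^1(\Omega)$ is dense in $H^1(\Omega)$) but can fail at base points touching the bounds, whereas your construction is indifferent to the location of the base point. Two caveats on your write-up: first, with $1/p+1/r+1/p'=1$, H\"older places the products $\t \mu_a u$ and $\t \mu_s u$ in $L^{p'/(p'-1)}(\D)$, the conjugate exponent of $p'$, not in $L^{p'}(\D)$; this is harmless because the bound~(\ref{eq:unif-bound}) and the extension concern the Hilbert case $p'=2$, where the two coincide, but the exponent should be stated correctly. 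Second, as you note at the end, your argument by itself yields \emph{an} extension; uniqueness of the extension is precisely equivalent to the (correctly formulated) density of admissible directions, which is what the paper's shorter argument was aiming to exploit.
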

\begin{proof}
Consider the ball $B_\rho(\mu_a, \mu_s): = \{(\tilde{\mu}_a,
\tilde{\mu}_s)\,:\, \|\mu_a - \tilde{\mu}_a\|^2_{H^1(\Omega)} +
\|\mu_s - \tilde{\mu}_s\|^2_{H^1(\Omega)} \leq \rho \}$. It is easy
to see that the set $B_\rho(\mu_a, \mu_s) \cap D(F)$ is dense in
$B_\rho(\mu_a, \mu_s)$ with the $H^1$-topology. Hence, $F'(\mu_a,
\mu_s)$ is densely defined by the directional derivatives
satisfying the uniform bound (\ref{eq:unif-bound}). The uniform
boundedness principle \cite{Yosida95} implies the existence of a
unique continuous extension to $[H^1(\Omega)]^2$, which we will
denote again by $F'(\mu_a, \mu_s)$.
\end{proof}

As observed before, $D(F)$ has no interior points when equipped with
the $[H^1(\Omega)]^2$-norm. Because of that, $F$ is not necessarily
differentiable in every direction $(\t \mu_a, \t \mu_s) \in
[H^1(\Omega)]^2$. In other words, $F$ is not Gateaux differentiable.
This will not affect the convergence analysis that follows. In fact,
for such analysis we only need that the operator $F$ attains a
one-sided directional derivative at $(\mu_a, \mu_s)$ in the
directions $(\t \mu_a, \t \mu_s)$, for all $(\t \mu_a, \t \mu_s) \in
D(F)$. The sufficient condition for this to happen is $D(F)$ to be
star-like with respect to $(\mu_a, \mu_s)$. That is, for every
$(\mu_a, \mu_s) \in D(F)$ there exists $t_0 >0$ such that $(\mu_a,
\mu_s)  + t ((\t \mu_a, \t \mu_s) - (\mu_a, \mu_s)) = t(\t \mu_a, \t
\mu_s) + (1- t)(\mu_a, \mu_s) \in D(F)$ for $ 0 \leq t \leq t_0$.
Since $D(F)$ has been convex, the requirement above follows.
Moreover, the bounded linear operator $F'(\mu_a, \mu_s)$ has
properties that mimic the Gateaux derivative.

\section{Regularization approaches}\label{sec:regularization}

We are assuming the first inverse problem in PAT is solved and that
the a measured absorbed energy map $E^\delta \in L^2(\Omega)$ satisfies equation~\ref{eq:noise}.

%%
%%xxxxxxxxxxxxxxxxxxxx
%%
%%in practice, using any of the well-known methods,
%%e.g. \cite{TZC2010, SU2009} and references therein, it is very
%%unlikely that one can get the exact solution $p_0$. Indeed, % other
%%%words, the solution of the first inverse problem is relayed in the
%%%necessity of recovering the initial pressure $p_0$, from
%%%measurements of the pressure on the surface of the domain. M
%%many sources of noise can affect the measurements in practical
%%applications, e.g. thermal noise in the detectors. Therefore,
%%instead of assuming exact data $E \in L^2(\Omega)$, we assume to
%%know a measured absorbed energy map $E^\delta \in L^2(\Omega)$,
%%satisfying
%%%
%%\begin{eqnarray}\label{eq:noise}
%%\|E - E^\delta\|_{L^2(\Omega)} \leq \delta\,,
%%\end{eqnarray}
%%%
%%where $\delta$ is a bounded for the noise level.
%%
%%xxxxxxxxxxxxxxxxxxxxxxxxxxxxxxxxxxxxxxxxxxxxx

Hence, the second inverse problem in PAT can be rewritten as
follows: find $(\mu_a, \mu_s) \in D(F)$ which correspond to the
measurements $E^\delta$. Mathematically, it means to solve the
nonlinear operator equation
\begin{eqnarray}\label{eq:op}
F(\mu_a, \mu_s)  = E^\delta\,, \qquad \mbox{s.t. } (\mu_a,
\mu_\delta) \in D(F) \mbox{ and } E^\delta \mbox{ satisfying }
(\ref{eq:noise})\,.
\end{eqnarray}

From physical reasons it is natural to assume that there exists
$(\mu^*_a, \mu^*_s) \in D (F) $ such that $F(\mu^*_a, \mu^*_s) = E$.
It means that the inverse problem has a solution. We remember that
the forward operator $F$ is compact (see
Theorem~\ref{theo:compactness}). Then it is ill-posed and some
regularization method has to be used to guarantee the existence of
stable approximated solutions. In this contribution we consider
Tikhonov-type regularization strategies for obtaining a stable
approximated solution for the second (QPAT) inverse problem.

Since PAT is particularly used for imaging different tissue regions,
it is common to see different requirements for the smoothness of the
structures. This information is crucial for proposing appropriated
the regularization term in the Tikhonov-type approaches that
reflects the expected smoothness of the coefficients. In the
following, we will use the smoothness of the coefficients has the
\textit{a priori} information in the Tikhonov approach.

\subsection{Tikhonov-type regularization: smooth
coefficients}\label{subsec:smooth_coeff}

In the following, we consider the standard Tikhonov regularization,
i.e., we define an approximated solution $(\mu^{\alpha, \delta}_a,
\mu^{\alpha, \delta}_s)$ as a minimizer of the Tikhonov functional
\begin{eqnarray}\label{eq:Tikho-func-smooth}
 \J_{\alpha}(\mu_a, \mu_s): & & = \frac{1}{p}\|F(\mu_a, \mu_s) -
E^\delta\|^p_{L^p(\Omega)} \\
\quad &&  + \alpha \left(\|\mu_a - \mu_{a,0}\|^2_{H^1(\Omega)} +
\|\mu_s - \mu_{s,0}\|^2_{H^1(\Omega)}\right) \nonumber
\end{eqnarray}
subject to $ (\mu_a, \mu_s) \in D(F) \cap H^1(\Omega)$ and $1 \leq p
\leq 2$. The element $(\mu_{a,0}, \mu_{s,0} \in [H^1(\Omega)]^2$
serves as an \textit{a-priori} guess for the unknown parameters and
$\alpha
> 0$ is the regularization parameter.

\begin{remark}\label{remark:Tikho1}
The restriction on $p \in [1, 2]$ reflects the following estimate:
Since $\Omega$ is bounded, $L^2(\Omega)$ is continuous embedding in
$L^p(\Omega)$, for $p \in [1, 2]$ and $\|\cdot\|_{L^p(\Omega)} \leq
C \|\cdot\|_{L^2(\Omega)}$.  Assume that $(\mu_a^\dag, \mu_s^\dag)
\in D(F) \cap H^1(\Omega)$ is a solution of (\ref{eq:operator}).
Then, for any minimizer $(\mu^{\alpha, \delta}_a, \mu^{\alpha,
\delta}_s)$ of $\J_\alpha$ we have that
\begin{eqnarray*}
\J_\alpha(\mu^{\alpha, \delta}_a, \mu^{\alpha, \delta}_s) &  \leq
\J_\alpha(\mu^\dag_a, \mu^\dag_s) %= \frac{1}{p} \|E -
%E^\delta\|^p_{L^p(\Omega)} + \alpha\left(\|\mu^\dag_a -
%\mu_{a,0}\|^2_{H^1(\Omega)} + \|\mu^\dag_s -
%\mu_{s,0}\|^2_{H^1(\Omega)}\right) \\
%&  \leq \frac{1}{p} \|E - E^\delta\|^p_{L^2(\Omega)} +
%\alpha\left(\|\mu^\dag_a - \mu_{a,0}\|^2_{H^1(\Omega)} +
%\|\mu^\dag_s - \mu_{s,0}\|^2_{H^1(\Omega)}\right)\\
%&
\leq \frac{1}{p} \delta^p + \alpha\left(\|\mu^\dag_a -
\mu_{a,0}\|^2_{H^1(\Omega)} + \|\mu^\dag_s -
\mu_{s,0}\|^2_{H^1(\Omega)}\right)\,.
\end{eqnarray*}
This estimate implies that if the perturbation in the measurements
goes to zero and the regularization parameter $\alpha$ is chosen
appropriately, then the regularized solutions can be shown to
converge to a solution of the inverse problem.
\end{remark}

The estimate in Remark~\ref{remark:Tikho1}, the continuity and
compactness of the forward operator $F$ in
Theorem~\ref{theo:continuity} and Theorem~\ref{theo:compactness},
and some minor modifications from the standard Tikhonov
regularization theory for nonlinear inverse problems are all that we
need  to show stability and convergence of the approximated
solutions. For details of the proofs see
\cite[Chapter~10]{EngHanNeu96}.
The next result states, existence, stability and convergence of an
approximated solutions of the inverse problem w.r.t. the noise in
the data.
\begin{theorem} Let the Tikhonov functional $\J_\alpha$ defined in
(\ref{eq:Tikho-func-smooth}), $p,q,r$ chosen as in
Theorem~\ref{theo:continuity} and Theorem~\ref{theo:compactness},
then:

\noindent \textbf{[Existence of a minimizer]} For any $\alpha > 0$,
the Tikhonov functional $\J_\alpha$ has a minimizer in $D(F) \cap
[H^1(\Omega)]^2$.

\noindent \textbf{[Stability]} Let be $\alpha > 0$ and let be
$\{E^k\}$ a sequence of measured data that converges strongly to
exact data $E$ in $L^2(\Omega)$. Let be $\{(\mu^k_a, \mu^k_s)\}$ the
respective sequence of minimizers of $\J_\alpha$ with $E^\delta$
replaced by $E^k$. Then, $\{(\mu^k_a, \mu^k_s)\}$ has a convergent
subsequence and the limit of every convergent subsequence is a
minimizer of $\J_\alpha$ in $D(F) \cap [H^1(\Omega)]^2$.

\noindent \textbf{[Convergence]} Let be $\{E^k\}$ a sequence of
measured data satisfying (\ref{eq:noise}), with $\delta$ replaced by
$\delta_k$. If $\delta_k \to 0$ and the regularization parameter is
chosen such that $\delta_k^p/ \alpha(\delta_k) \to 0$, then any
sequence of minimizers of the Tikhonov functional $\J_\alpha$ with
$E^\delta$ replaced by $E^k$ has a convergent subsequence. Moreover,
the limit of every convergent subsequence is compatible with the
data and has a minimum distance to the \textit{a priori} guess
$(\mu_{a,0}, \mu_{s,0})$. This limit is called an $(\mu_{a,0},
\mu_{s,0})$-minimum-norm solution and denoted by $(\mu^\dag_a,
\mu^\dag_s)$.
\end{theorem}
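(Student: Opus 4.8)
The plan is to run the standard convergence analysis for Tikhonov regularization of nonlinear ill-posed problems, as in \cite[Chapter~10]{EngHanNeu96}, feeding it the structural facts already established: the convexity and closedness of $D(F)$, the complete continuity and weak sequential closedness of $F$ into $L^{p'}(\Omega)$ from Theorem~\ref{theo:compactness}, and the continuity estimate of Theorem~\ref{theo:continuity}. All three assertions share a single engine. Whenever a sequence $\{(\mu_a^n,\mu_s^n)\}\subset D(F)\cap[H^1(\Omega)]^2$ has bounded Tikhonov values, the penalty term is bounded, so the sequence is bounded in $[H^1(\Omega)]^2$; by reflexivity it has a weakly convergent subsequence, whose limit lies in $D(F)$ because $D(F)$ is convex and closed, hence weakly closed. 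Complete continuity then upgrades this weak convergence to \emph{strong} convergence of the images $F(\mu_a^n,\mu_s^n)$ in $L^{p'}(\Omega)$, while each map $\mu\mapsto\|\mu-\mu_{a,0}\|^2_{H^1(\Omega)}$ in the penalty is weakly lower semicontinuous; and, in the Hilbert space $H^1(\Omega)$, weak convergence together with convergence of the norms upgrades to strong convergence by Radon--Riesz, which will furnish the strongly convergent subsequences asserted in the stability and convergence parts.

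For \textbf{existence}, I would take a minimizing sequence for $\J_\alpha$. Finiteness of the infimum bounds the penalty, producing an $[H^1(\Omega)]^2$-bounded sequence; extract a weak limit $(\mu_a^*,\mu_s^*)\in D(F)$ as above. Complete continuity gives $F(\mu_a^n,\mu_s^n)\to F(\mu_a^*,\mu_s^*)$ strongly, so the data-fidelity term converges; combined with weak lower semicontinuity of the penalty this yields $\J_\alpha(\mu_a^*,\mu_s^*)\le\liminf_n\J_\alpha(\mu_a^n,\mu_s^n)=\inf\J_\alpha$, and the limit is a minimizer.

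For \textbf{stability}, writing $\J_\alpha^{E^k}$ for the functional with $E^\delta$ replaced by $E^k$, I would compare the minimizers $(\mu_a^k,\mu_s^k)$ against a fixed admissible point (for instance $(\mu_{a,0},\mu_{s,0})$) to bound the penalty uniformly in $k$, so the sequence is $H^1$-bounded with a weak limit $(\bar\mu_a,\bar\mu_s)\in D(F)$. The usual sandwich closes the argument: for any competitor $(\mu_a,\mu_s)\in D(F)\cap[H^1(\Omega)]^2$ one has $\J_\alpha^{E^k}(\mu_a^k,\mu_s^k)\le\J_\alpha^{E^k}(\mu_a,\mu_s)\to\J_\alpha^{E}(\mu_a,\mu_s)$, whereas lower semicontinuity gives $\J_\alpha^{E}(\bar\mu_a,\bar\mu_s)\le\liminf_k\J_\alpha^{E^k}(\mu_a^k,\mu_s^k)$; combining, $(\bar\mu_a,\bar\mu_s)$ minimizes $\J_\alpha$ for the data $E$. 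For \textbf{convergence}, I would instead compare against a true solution $(\mu_a^\dag,\mu_s^\dag)$ through the estimate recorded in Remark~\ref{remark:Tikho1}, namely $\J_{\alpha_k}^{E^k}(\mu_a^k,\mu_s^k)\le\frac{1}{p}\delta_k^p+\alpha_k(\|\mu_a^\dag-\mu_{a,0}\|^2_{H^1(\Omega)}+\|\mu_s^\dag-\mu_{s,0}\|^2_{H^1(\Omega)})$. Dividing by $\alpha_k$ and using $\delta_k^p/\alpha_k\to0$ bounds the penalty (hence gives an $H^1$-bounded sequence with a weak limit $(\bar\mu_a,\bar\mu_s)\in D(F)$), while the same estimate forces the residual $\|F(\mu_a^k,\mu_s^k)-E^k\|^p_{L^p(\Omega)}\to0$; complete continuity then identifies $F(\bar\mu_a,\bar\mu_s)=E$, so the limit is compatible with the data. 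Taking $\limsup_k$ in the penalty bound and invoking weak lower semicontinuity yields $\|\bar\mu_a-\mu_{a,0}\|^2_{H^1(\Omega)}+\|\bar\mu_s-\mu_{s,0}\|^2_{H^1(\Omega)}\le\|\mu_a^\dag-\mu_{a,0}\|^2_{H^1(\Omega)}+\|\mu_s^\dag-\mu_{s,0}\|^2_{H^1(\Omega)}$ for every solution $(\mu_a^\dag,\mu_s^\dag)$, which is precisely the $(\mu_{a,0},\mu_{s,0})$-minimum-norm property.

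The step I expect to require the most care is reconciling the two norms in play: the fidelity term is measured in $L^p(\Omega)$, whereas $F$ is known to be continuous and completely continuous only into $L^{p'}(\Omega)$. On the bounded domain $\Omega$ the embedding $L^{p'}(\Omega)\hookrightarrow L^p(\Omega)$ holds exactly when $p\le p'$, and I would verify that the restriction $1\le p\le2$ in the definition of $\J_\alpha$, together with the H\"{o}lder relation $1/p+1/p'+1/r=1$ of Theorem~\ref{theo:continuity}, guarantees this, so that strong $L^{p'}$-convergence of the images genuinely controls the $L^p$-fidelity. The remaining bookkeeping — that the exponents can be chosen to meet the hypotheses of Theorems~\ref{theo:continuity} and~\ref{theo:compactness} simultaneously, and that $D(F)\cap[H^1(\Omega)]^2$ is weakly closed — is routine.
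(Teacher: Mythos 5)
Your proposal is correct and follows essentially the same route as the paper: the paper does not write out a proof but explicitly assembles the same ingredients — the estimate of Remark~\ref{remark:Tikho1}, the continuity and complete continuity/weak sequential closedness of $F$ from Theorem~\ref{theo:continuity} and Theorem~\ref{theo:compactness}, and the convexity/closedness of $D(F)$ — and defers to the standard nonlinear Tikhonov theory of \cite[Chapter~10]{EngHanNeu96}, which is exactly the argument you instantiate (including the Radon--Riesz upgrade to strong convergence and the $L^{p'}\hookrightarrow L^p$ embedding reconciliation).
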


It is possible to obtain quantitative convergence results if some
\textit{a priory} smoothness of the solution is required. It is
known as source condition and read as follows: Let be $(\mu^\dag_a,
\mu^\dag_s)$ a $(\mu_{a,0}, \mu_{s,0})$-minimum-norm solution.
Assume that $F$ has a directional derivative at $(\mu^\dag_a,
\mu^\dag_s)$ and denote the adjoint of $F'$ by $F'[\mu^\dag_a,
\mu^\dag_s]^*$. Moreover, assume that there exists an element $w \in
L^2(\Omega)$ such that
\begin{eqnarray}\label{eq:source}
(\mu^\dag_a, \mu^\dag_s) - (\mu_{a, 0}, \mu_{s,0}) = F'[\mu^\dag_a,
\mu^\dag_s]^* w\,, \mbox{ and }\,  C \|w\|_{L^2(\Omega)} \leq 1\,,
\end{eqnarray}
where $C$ is a constant which depends only on the boundedness of the
coefficients and the source of (\ref{eq:RTE}). Then the classical
convergence rates result \cite[Theorem 10.4]{EngHanNeu96} holds
\begin{eqnarray*}
\|F(\mu^{\delta, \alpha}_a, \mu^{\delta, \alpha}_a) -
E^\delta\|_{L^p(\Omega)} = O(\delta^{1/p})
\end{eqnarray*}
 and
\begin{eqnarray*}
  \|(\mu^{\delta, \alpha}_a, \mu^{\delta, \alpha}_a) -
(\mu^\dag_a, \mu^\dag_s)\|_{[H^1(\Omega)]^2} = O(\sqrt{\delta})\,.
\end{eqnarray*}
However, the source condition~(equation~\ref{eq:source}) is hard to be
verified in practice.

\subsection{Piecewise constant coefficient: A level set regularization
approach}\label{subsec:pc_coeff}

PAT is particularly well-suited for imaging the non-smooth structure
of the blood vasculature. In this case, the absorption and
scattering coefficients are well approximated by piecewise constant
functions.

For easy of notation, in this article we will assume that the pair
of absorption and scattering parameters $(\mu_a,\mu_s)$ has two
distinct unknown values, i.e. $\mu_a(x) \in \{a^1,a^2\}$ and
$\mu_s(x) \in \{c^1,c^2\}$ a.e. in $\Omega \subset \mathcal{R}^n$.
Therefore, we can assume the existence of open and mensurable sets
$\A_1 \subset \subset \Omega$ and $\C_1 \subset \subset \Omega$,
with $\mathcal{H}^1(\partial \A_1) < \infty \mbox{ and }
\mathcal{H}^1(\partial \C_1) < \infty$,\footnote{Here
$\mathcal{H}^1(\mathcal{S})$ denotes the one-dimensional
Hausdorff-measure of the set $\mathcal{S}$.} s.t. $\mu_a(x) = a^1\,,
x \in \A_1$, $\mu_s(x) = c^1\,, x \in \C_1$  and $\mu_a(x) = a^2\,,
x \in \A_2:= \Omega - \A_1$, $\mu_s(x) = c^2\,, x \in \C_2: = \Omega
- \C_1$. Hence, the pair of piecewise constant absorption and
scattering coefficients can be written as
\begin{eqnarray} \label{eq:def-a-c}
( \mu_a(x),\mu_s(x))  = (a^2 + (a^1-a^2) \chi_{\A_1}(x), c^2 +
(c^1-c^2) \chi_{\C_1}(x)) \, ,
\end{eqnarray}
where $\chi_{\mathcal{S}}$ is the indicator function of the set
$\mathcal{S}$.
%%%
%%
%%
%%For simplicity of the notation, let us define
%%%
%%\begin{eqnarray}\label{eq:notation}
%%\vb (x):= (a(x), c(x))\,,\quad \vb_{ij}:=(a^i, c^j)\,, \mbox{ for }
%%i,j \in \{1,2\}\,.
%%\end{eqnarray}
%%%
%%
%%\begin{remark}
%%Note that such $\vb(x)$ is a pair of piecewise functions belongs to
%%$D(F)$. Moreover $\vb_{ij} \in R^2$ for $i,j \in \{1,2\}$.
%%\end{remark}
%%

In order to model the space of admissible parameters (the pair of
piecewise constant function $(\mu_a(x),\mu_s(x))$), we use a
standard level set (sls) approach proposed in \cite{FSL05, CLT11,
CLT09A, CLT09, CL2010}. According to this representation strategy, a
pair of real valued functions $(\phia, \phic)  \in [H^1(\Omega)]^2$
is chosen in such way that its zero level-set $ \{x \in \Omega \, ;
\ \phia(x) = 0\}$ and $\{x \in \Omega \, ; \ \phic(x) = 0\}$ define
connected curves within $\Omega$ and that the discontinuities of the
parameters are located 'along' the zero level set of $\phia$ and
$\phic$, respectively.

The piecewise constant requirement for the pair of coefficients
$(\mu_a, \mu_s)$ is obtained by introducing the Heaviside projector
$H(t)$
%%
%\begin{eqnarray}
%H(t) \ := 1\,,  \mbox{ if } t > 0\,, \quad 0\,,,  \box{ if } t \le
%0\,,
%\end{eqnarray}
%
which allows us to represent the absorption and scattering
coefficients as %%
\begin{eqnarray} \label{eq:def-P}
(\mu_a(x), \mu_s(x))  &  =  \ (a^1 H(\phia) + a^2(1-H(\phia)), c^1
H(\phic) + c^2(1-H(\phic)))\\
                    &   =: \ P(\phia, \phic, \vb_{ij}) \, ,
                    \nonumber
\end{eqnarray}
where $\vb_{ij}$ represents the vector of constant values
$\vb_{i,j}:= (a^1, a^2, c^1, c^2) \in \R^4$.

Within this framework, the inverse problem in (\ref{eq:operator})
with data given by equation~(\ref{eq:noise}), can be written in the
operator equation form
\begin{eqnarray} \label{eq:inv-probl-fps}
F(P(\phia, \phic, \vb_{ij}) ) \ = E^\delta\,.
\end{eqnarray}

Notice that, if an approximate solution $(\phia, \phic, \vb_{ij})$
of (\ref{eq:inv-probl-fps}) is calculated, a corresponding
approximate solution of (\ref{eq:operator}) is obtained in a
straightforward way: $(\mu_a, \mu_s) = P(\phia, \phic, \vb_{ij})$.

We remark that the analysis of level set approach for the pair of
parameter which has many piecewise components follows essentially
from the techniques derived in this approach with the multi-level
framework approach in \cite{CLT09}. Therefore we do not go through
the details here. %%

For guarantee a stable approximate solution for the operator
equation (\ref{eq:inv-probl-fps}) we introduce the energy functional
\begin{eqnarray} \label{eq:Tikhonov-functional}
{\cal F}_\alpha (\phia, \phic, \vb_{ij}) := & \| F (P(\phia, \phic,
\vb_{ij})) - E^\delta \|^2_{L^2(\Omega)} + \alpha f(\phia, \phic,
\vb_{ij}) \,,
\end{eqnarray}
%%%
where $\alpha > 0$ plays the role of a regularization parameter and
$$f(\phia, \phic, \vb_{ij}) = |H(\phia)|_\bv + |H(\phic)|_\bv + \|
\phia - \phia_{,0} \|^2_{H^1(\Omega)} + \| \phic - \phic_{,0}
\|^2_{H^1(\Omega)}$$ $ + \|\vb_{ij}\|^2_{\R^4}$ is the
regularization functional. This approach is based on TV-$H^1$
penalization. The $H^1$--terms act simultaneously as a control on
the size of the norm of the level set function and as a
regularization on the space $H^1(\Omega)$. The $\bv$-seminorm terms
are well known for penalizing the length of the Hausdorff measure of
the boundary of the sets $\{x \in \Omega \,:\, \phia(x) > 0 \}$,
$\{x \in \Omega \, : \, \phic(x) > 0 \}$ (see \cite{EG92}). Others
level set approaches have been applied to recover piecewise constant
function in the literature, e.g. \cite{DL09, CL2010, CLT11, DA06}
and references therein.

In general, variational minimization techniques involve compact
embedding arguments and the continuity of the forward operator on
the set of admissible minimizers to guarantee the existence of
minimizers. The Tikhonov functional in
(\ref{eq:Tikhonov-functional}) does not allow such characteristic,
since the Heaviside operator $H$ and consequently the operator $P$
in equation~\ref{eq:def-P} are discontinuous. Therefore, given a minimizing
sequence $(\phi_a^k, \phi_c^k,\vb_{ij}^k)$ for ${\cal F}_\alpha$ we
cannot prove existence of a (weak-*) convergent subsequence.
Consequently, we cannot guarantee the existence of a minimizer in
$[H^1(\Omega)]^2 \times \R^4$. To overcome this difficulty we follow
\cite{FSL05, CLT09A, CLT09} and introduce the concept of generalized
minimizers in order to guarantee the existence of minimizers of the
Tikhonov functional (\ref{eq:Tikhonov-functional}).

First we introduce a smooth approximation of the Heaviside
projection given by
$$
H_\ve(t) := \left\{
  \begin{array}{rl}
    1 + t/\ve  & \mbox{ for \ } t \in \left[-\ve,0\right] \\
    H(t)       & \mbox{ for \ } t \in \mathcal{R} / \left[-\ve,0\right] \\
\end{array} \right.
$$
and the corresponding operator
\begin{eqnarray} \label{eq:def-Pve}
\quad P_{\ve}(\phia, \phic,\vb_{ij}) \ := \ (a^1 H_\ve(\phia) + a^2
(1 - H_\ve(\phia)),  c^1 H_\ve(\phic) + c^2 (1 - H_\ve(\phic))) \,,
\end{eqnarray}
for each $\ve > 0$. Then:

\begin{definition} \label{def:vector}
Let be the operators $H$, $P$, $H_\ve$ and $P_{\ve}$ defined as
above.\\
\medskip
 \noindent i) A {\bf vector} $(z_1, z_2,\phia, \phic,\vb_{ij}) \in
[L^\infty(\Omega)]^2 \times [H^1(\Omega)]^2 \times \mathcal{R}^2$ is
called {\bf admissible} when there exists sequences $\{ \phi_a^k \}$
and $\{ \phi_s^k \}$ of $H^1(\Omega)$-functions satisfying
$$\lim\limits_{k\to\infty} \| \phi_a^k - \phia \|_{L^2(\Omega)} =
0\,,\quad \lim\limits_{k\to\infty} \| \phi_s^k - \phic
\|_{L^2(\Omega)} =  0$$ and there exists a sequence $\{ \ve_k \} \in
\mathcal R^+$ converging to zero such that
$$\lim\limits_{k\to\infty} \| H_{\ve_k}(\phi_a^k)-z_1
\|_{L^1(\Omega)} = 0\, \mbox{ and } \lim\limits_{k\to\infty} \|
H_{\ve_k}(\phi_s^k)-z_2 \|_{L^1(\Omega)} = 0\,.$$
\noindent b) A {\bf generalized minimizer} of the Tikhonov
functional $\mathcal{F}_\alpha$ in (\ref{eq:Tikhonov-functional}) is
considered to be any admissible vector $(z_1, z_2,\phia,
\phic,\vb_{ij})$ minimizing
\begin{eqnarray} \label{eq:gzphi}
\qquad {\cal{G}}_\alpha(z_1, z_1,\phia, \phic,\vb_{ij}) :=  \|
F(q(z_1,z_2,\vb_{ij})) - E^\delta \|^2_{L^2(\Omega)} + \alpha
     R(z_1,z_2,\phia, \phic,\vb_{ij})
\end{eqnarray}
over the set of admissible vectors, where $$q: [L_\infty(\Omega)]^2
\times \mathcal R^2 \ni (z_1,z_2,\vb_{ij}) \mapsto (a^1 z_1 +
a^2(1-z_2), c^1 z_2 + c^2 (1-z_2)) \in [L_{\infty}(\Omega)]^2\,,$$
and the functional $R$ is defined by
\begin{eqnarray} \label{def:R}
R(z_1, z_2,\phia, \phic,\vb_{ij}) \ := \ \rho(z_1,z_2,\phia, \phic)
+ \|\vb_{ij}\|^2_{\mathcal R^2} \, ,
\end{eqnarray}
with
\begin{eqnarray*}%%\label{def:rho}
\rho(z_1,z_2,\phia, \phic) & & := \inf \left\{ \liminf_{k\to\infty}
\left(|H_{\ve_k} (\phi_a^k)|_\bv + |H_{\ve_k} (\phi_s^k)|_\bv
\right. \right. \\ & &  \qquad \qquad \qquad\qquad \qquad \qquad
\left. \left.  + \|( \phi^k_a, \phi^k_s) - (\phia_{,0},
\phic_{,0})\|^2_{[H^1(\Omega)]^2} \right) \right\}\,.
\end{eqnarray*}
Here the infimum is taken over all sequences $\{\ve_k\}$ and
$\{\phi_a^k, \phi_s^k\}$ characterizing the vector $(z_1,z_2,\phia,
\phic,\vb_{ij})$ as an admissible vector.
\end{definition}

Given the continuity of the operator $F$ in $L^1(\Omega)$ (see
Theorem~\ref{theo:continuity}), we can follow the proofs in
\cite{CLT09A, CLT09} to guarantee the existence, stability and
convergence of approximated solutions for the inverse problem~
(\ref{eq:inv-probl-fps}). For the sake of completeness, we collect
the results without proving.

\begin{theorem} Let`be $p,q,r$ satisfying the assumption of Corollary~\ref{coro:continuity}. Then the following assertions hold true.

\noindent \textbf{Existence:} The functional $\Ga$ in
(\ref{eq:gzphi}) attains minimizers on the set of admissible
vectors.

\noindent \textbf{Convergence for exact data:} Assume that we have
exact data, i.e. $E^\delta=E$. For every $\alpha > 0$ denote by
$(z^1_\alpha, z^2_\alpha, \phia_\alpha,
\phic_\alpha,\vb_{{ij},\alpha})$ a minimizer of $\Ga$ on the set of
admissible vectors. Then, for every sequence of positive numbers
$\{\alpha_k\}$ converging to zero there exists a subsequence,
denoted again by $\{\alpha_k\}$, such that $(z^1_{\alpha_k},
z^2_{\alpha_k}, \phia_{\alpha_k},
\phic_{\alpha_k},\vb_{{ij},{\alpha_k}})$ is strongly convergent in
$[L^1(\Omega)]^2 \times [L^2(\Omega)]^2 \times \mathcal{R}^2$.
Moreover, the limit is a solution of (\ref{eq:inv-probl-fps}).
%%%

\noindent {\bf Convergence for noise data: } Let $\alpha =
\alpha(\delta)$ be a function satisfying $\lim_{\delta \to 0}$
$\alpha(\delta) = 0$ and $\lim_{\delta \to 0} \delta^2
\alpha(\delta)^{-1} = 0$. Moreover, let $\{ \delta_k \}$ be a
sequence of positive numbers converging to zero and $\{ E^{\delta_k}
\} \in L^2(\Omega)$ be corresponding noise data satisfying
(\ref{eq:noise}). Then, there exists a subsequence, denoted again by
$\{ \delta_k \}$, and a sequence $\{ \alpha_k := \alpha(\delta_k)
\}$ such that $(z^1_{\alpha_k}, z^2_{\alpha_k},\phia_{\alpha_k},
\phic_{\alpha_k}, \vb_{{ij},{\alpha_k}})$ converges in
$[L^1(\Omega)]^2 \times [L^2(\Omega)]^2 \times \mathcal{R}^2$ to
solution of (\ref{eq:inv-probl-fps}).
\end{theorem}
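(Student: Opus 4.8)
The plan is to transplant the generalized-minimizer framework of \cite{FSL05, CLT09A, CLT09} to the present operator, using as the sole analytic input the $L^1(\Omega)$-continuity of $F$ established in Corollary~\ref{coro:continuity}. Throughout I write $V_\alpha := (z^1_\alpha, z^2_\alpha, \phia_\alpha, \phic_\alpha, \vb_{ij,\alpha})$ for a minimizer and $V^\dag := (z_1^\dag, z_2^\dag, \phia^\dag, \phic^\dag, \vb_{ij}^\dag)$ for an admissible vector representing an exact solution $(\mu_a^\dag,\mu_s^\dag)$ of (\ref{eq:inv-probl-fps}).

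\textbf{Existence.} First I take a minimizing sequence $\{V^k\}$ for $\Ga$ over the admissible set and read off coercivity from $R$ in (\ref{def:R}): the $H^1$-terms bound $\{\phia^k\},\{\phic^k\}$ in $H^1(\Omega)$, the $\bv$-seminorms bound $\{z_1^k\},\{z_2^k\}$ in $\bv$, and $\|\vb_{ij}^k\|^2$ bounds the finite-dimensional part. By the compact embeddings $H^1(\Omega)\hookrightarrow L^2(\Omega)$ and $\bv\hookrightarrow L^1(\Omega)$ and finite dimensionality of the vector factor, I pass to a subsequence with $\phia^k\to\phia$, $\phic^k\to\phic$ in $L^2(\Omega)$, $z_i^k\to z_i$ in $L^1(\Omega)$, and $\vb_{ij}^k\to\vb_{ij}$.

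The delicate point is to show the limit $(z_1,z_2,\phia,\phic,\vb_{ij})$ is itself admissible in the sense of Definition~\ref{def:vector}. Each $V^k$ carries characterizing sequences $\{\phi^{k,j}_a\}_j,\{\phi^{k,j}_s\}_j$ in $H^1(\Omega)$ and parameters $\{\ve_{k,j}\}_j\downarrow 0$; a diagonal extraction then yields single sequences and a vanishing $\ve$ that simultaneously approximate $\phia,\phic$ in $L^2(\Omega)$ and $z_1,z_2$ in $L^1(\Omega)$, which is precisely admissibility of the limit. Lower semicontinuity of the $\bv$-seminorm and of the $H^1$-norm under the above convergences gives $R(z_1,z_2,\phia,\phic,\vb_{ij})\le\liminf_k R(V^k)$, while $L^1$-continuity of $F$ together with continuity of the multiplication map $q$ yields $F(q(z_1^k,z_2^k,\vb_{ij}^k))\to F(q(z_1,z_2,\vb_{ij}))$ in $L^2(\Omega)$. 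Adding the two shows $\Ga$ is sequentially lower semicontinuous along the subsequence, so the limit attains the infimum and is a generalized minimizer.

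\textbf{Convergence for exact and noisy data.} Both statements rest on the same comparison. For exact data, minimality gives $\Ga(V_\alpha)\le\Ga(V^\dag)=\alpha R(V^\dag)$, so the residual $\|F(q(V_\alpha))-E\|^2_{L^2(\Omega)}$ and $\alpha R(V_\alpha)$ are both controlled by $\alpha R(V^\dag)$. Sending $\alpha_k\to 0$ forces the residual to vanish and keeps $R(V_{\alpha_k})$ bounded; the compactness of the previous paragraph extracts a subsequence converging in $[L^1(\Omega)]^2\times[L^2(\Omega)]^2\times\mathcal{R}^2$, and $L^1$-continuity of $F$ identifies its limit as a solution of (\ref{eq:inv-probl-fps}). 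For noisy data I instead use $\|F(q(V_{\alpha_k}))-E^{\delta_k}\|^2_{L^2(\Omega)}+\alpha_k R(V_{\alpha_k})\le\delta_k^2+\alpha_k R(V^\dag)$ and the parameter choice $\delta_k^2\alpha_k^{-1}\to 0$ to drive both the data misfit and the excess penalty to zero, passing to the limit exactly as before. The principal obstacle is the admissibility step above: because the Heaviside projector — and hence $P$ in (\ref{eq:def-P}) — is discontinuous, the limits $z_1,z_2$ cannot be recovered as $H(\phia),H(\phic)$, so one must keep $z_1,z_2$ decoupled from the level-set functions and rebuild their characterizing sequences by the diagonal argument, where the interplay between $\ve_{k,j}\to 0$ and the $L^1$-convergence of the $z_i^k$ requires care. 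Once this is secured, the lower semicontinuity of the penalty together with Corollary~\ref{coro:continuity} makes the remaining estimates routine.
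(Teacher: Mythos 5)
Your proposal is correct and follows essentially the same route as the paper: the paper gives no proof of its own, but explicitly delegates to the generalized-minimizer framework of \cite{FSL05, CLT09A, CLT09}, with the $L^1(\Omega)$-continuity of $F$ from Corollary~\ref{coro:continuity} as the only problem-specific ingredient. Your write-up simply reconstructs the standard details of that cited argument (coercivity of $R$, diagonal extraction to show closedness of the admissible set, lower semicontinuity of the penalty, and the usual Tikhonov comparison estimates for exact and noisy data), which is exactly the proof the paper intends.
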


\begin{remark} \label{rem:ad-top}
The set of admissible vector is to be considered as a topological
space, namely a subset of $[L^\infty(\Omega)]^2 \times [H^1(\Omega)
]^2\times \mathcal{R}^4$ endowed with the topology of
$[L^1(\Omega)]^2\times [ L^2(\Omega)]^2 \times \mathcal{R}^4$. In
order to guarantee the existence of generalized minimizers of
$\mathcal{F}_\alpha$ one interesting properties is the  closedness
of this extended parameter space. It is analyzed in \cite{FSL05,
CLT09}.

We also remark that the definition of admissible vector (see
Definition~\ref{def:vector}) is constructed in a non-standard
manner. However, such definition implies in the closedness of the
graph of the Tikhonov functional defined in
(\ref{eq:Tikhonov-functional}) and hence the existence of a
generalized minimizer of the Tikhonov functional
$\mathcal{F}_\alpha$ in (\ref{eq:Tikhonov-functional}).
%%More details are given in Theorem~\ref{th:just}.

It is worth noticing that, for each $\ve> 0$ the
$L^\infty$--functions $H_{\eps} (\phi_a)$ and $H_{\eps} (\phi_s)$ in
Definition~\ref{def:vector} are elements of $D(F)$. Moreover, from
the smoothness of the level set functions, they are also in
$H^1(\Omega)$. Hence, the Fr\'echet derivative of the forward
operator $F$ in Theorem~\ref{theo:dir-diff} holds.
\end{remark}

\subsubsection{Numerical realization of the Tikhonov approach}
\label{sec:tow-num-sol} %%

We remark that the Tikhonov functional $\Ga$ defined in the previous
section is not suitable for computing numerical approximations to
the solution of (\ref{eq:inv-probl-fps}). This becomes obvious when
one observes the definition of the penalization term $\rho$ in
Definition~\ref{def:vector}.

In this section we introduce the functional $\Ger$, which can be
used for the purpose of numerical implementations. This functional
is defined in such a way that it's minimizers are 'close' to the
generalized minimizers of $\Ga$ in a sense that will be made clear
later (see Proposition~\ref{th:just}). For each $\eps > 0$ we define
the functional
\begin{eqnarray} \label{eq:m-reg}
\Ger(\phia, \phic, \vb_{ij}) \ := \| F(P_\eps(\phia,\phic,
\vb_{ij})) - E^\delta \|^2_{L^2(\Omega)} +
    \alpha R_\ve(\phia, \phic, \vb_{ij})\; ,
\end{eqnarray}
where
\begin{eqnarray*}%\label{eq:Rve}
R_\ve(\phia, \phic, \vb_{ij}) : =  & & \left( |H_\eps(\phia)|_{\bv}
+ |H_{\eps} (\phic)|_{\bv} \right.\\
 & & \qquad \qquad \qquad  \left. + \| (\phia, \phic) - (\phia_{,0}, \phic_{,0}) \|^2_{[H^1(\Omega)]^2} + \|\vb_{ij}\|^2_{R^4}\right)\,
\end{eqnarray*}
and
 $P_\eps(\phia,\phic, \vb_{ij})$ is the operator defined in (\ref{eq:def-Pve}). This
functional is well-posed as the following lemma shows:

\begin{lemma}\cite[Lemma~10]{CLT09}\label{lemma:13}
Given $\alpha > 0$, $\eps > 0$ and $(\phia_{,0}, \phic_{,0}) \in
 [H^1(\Omega)]^2$, then the functional $\Ger$ in (\ref{eq:m-reg}) attains
a minimizer on $[H^1(\Omega)]^2 \times \mathcal{R}^4$.
\end{lemma}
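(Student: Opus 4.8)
The plan is to apply the direct method of the calculus of variations. First I would observe that $\Ger$ is bounded below by zero, so the infimum $m := \inf \Ger$ over $[H^1(\Omega)]^2 \times \R^4$ is a finite nonnegative number, and I would fix a minimizing sequence $(\phi^k_a, \phi^k_s, \vb^k_{ij})$ with $\Ger(\phi^k_a, \phi^k_s, \vb^k_{ij}) \to m$. Discarding finitely many terms, I may assume $\Ger(\phi^k_a, \phi^k_s, \vb^k_{ij}) \leq m + 1$ for all $k$. Since each summand of $R_\ve$ is nonnegative and the fidelity term is nonnegative, the regularization term alone is bounded along the sequence; in particular the two $H^1$-penalties control $\|\phi^k_a - \phi_{a,0}\|_{H^1(\Omega)}$ and $\|\phi^k_s - \phi_{s,0}\|_{H^1(\Omega)}$, while $\|\vb^k_{ij}\|^2_{\R^4}$ controls the constants. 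Hence $(\phi^k_a, \phi^k_s)$ is bounded in $[H^1(\Omega)]^2$ and $\vb^k_{ij}$ is bounded in $\R^4$.

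Next I would extract compactness. By reflexivity of $H^1(\Omega)$ and finite-dimensional compactness, a subsequence (not relabelled) satisfies $\phi^k_a \rightharpoonup \phia$, $\phi^k_s \rightharpoonup \phic$ weakly in $H^1(\Omega)$ and $\vb^k_{ij} \to \vb_{ij}$ in $\R^4$. The compact Rellich--Kondrachov embedding of $H^1(\Omega)$ into $L^2(\Omega)$ gives, along a further subsequence, $\phi^k_a \to \phia$ and $\phi^k_s \to \phic$ strongly in $L^2(\Omega)$ and, passing once more, almost everywhere in $\Omega$. Because $H_\eps$ is continuous and takes values in $[0,1]$, the functions $H_\eps(\phi^k_a)$ are uniformly bounded and converge a.e. to $H_\eps(\phia)$; since $\Omega$ is bounded, dominated convergence yields $H_\eps(\phi^k_a) \to H_\eps(\phia)$ in $L^1(\Omega)$ and in $L^2(\Omega)$, and likewise for the $s$-component. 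This is the step that converts weak convergence of the level-set functions into strong convergence of the coefficients.

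Then I would check lower semicontinuity of $\Ger$ term by term at $(\phia, \phic, \vb_{ij})$. For the data-fidelity term I would combine the $L^2$-convergence $H_\eps(\phi^k_a) \to H_\eps(\phia)$ with $\vb^k_{ij} \to \vb_{ij}$ to get $P_\eps(\phi^k_a, \phi^k_s, \vb^k_{ij}) \to P_\eps(\phia, \phic, \vb_{ij})$ in $[L^2(\Omega)]^2$ (the coefficients stay inside $D(F)$ since $H_\eps \in [0,1]$ confines them between the admissible constant values, a constraint preserved in the limit); then the $p'=2$ continuity of $F$ from $[L^2(\Omega)]^2$ into $L^2(\Omega)$ established in Remark~\ref{remark:continuity}, valid here because $q, u_0 \in L^\infty(\D)$, shows the fidelity term in fact converges. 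The $\R^4$-term converges by $\vb^k_{ij} \to \vb_{ij}$, and the two $H^1$-distance terms are weakly lower semicontinuous, so each is bounded above by the $\liminf$ of its values along the sequence.

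The main obstacle is the pair of total-variation seminorms $|H_\eps(\phi^k_a)|_{\bv}$ and $|H_\eps(\phi^k_s)|_{\bv}$. Here I would use the $L^1$-convergence $H_\eps(\phi^k_a) \to H_\eps(\phia)$ established above together with the classical lower semicontinuity of the total variation with respect to $L^1$-convergence (see \cite{EG92}) to conclude $|H_\eps(\phia)|_{\bv} \leq \liminf_k |H_\eps(\phi^k_a)|_{\bv}$, and similarly for $\phic$. Summing the five estimates gives $\Ger(\phia, \phic, \vb_{ij}) \leq \liminf_k \Ger(\phi^k_a, \phi^k_s, \vb^k_{ij}) = m$, so $(\phia, \phic, \vb_{ij})$ is a minimizer. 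The delicate point throughout is matching the mode of convergence to each term---weak $H^1$ for the Sobolev penalties, $L^1$ for the BV seminorms, and $L^2$ for the fidelity---which is precisely why composing with the bounded, continuous mollified Heaviside $H_\eps$ rather than the discontinuous $H$ is essential, and why the analogous existence statement fails for $\Ga$.
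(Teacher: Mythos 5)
Your proof is correct and takes essentially the same route as the paper's source for this lemma: the paper itself gives no proof, deferring entirely to \cite[Lemma~10]{CLT09}, and the argument there is exactly the direct method you reconstruct --- coercivity of the $H^1$- and $\R^4$-penalties along a minimizing sequence, weak $H^1$ and strong $L^2$ compactness, continuity and uniform boundedness of $H_\eps$ with dominated convergence, $L^1$-lower semicontinuity of the $\bv$-seminorm, weak lower semicontinuity of the quadratic terms, and the $L^2$-continuity of $F$ from Remark~\ref{remark:continuity} under $q, u_0 \in L^\infty(\D)$. The one point worth keeping explicit (your parenthetical already gestures at it) is that $F(P_\eps(\phia,\phic,\vb_{ij}))$ is only defined when the entries of $\vb_{ij}$ keep $P_\eps$ inside $D(F)$, so the minimization over $\R^4$ must be read with $\Ger=+\infty$ off that set; with that convention the admissibility of the constants is preserved along the sequence and in the limit, and your argument goes through unchanged.
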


The next result guarantees that, for $\eps \to 0$, the minimizers of
$\Ger$ approximate a generalized minimizer of $\Ga$.

\begin{proposition}\cite[Theorem~11]{CLT09} \label{th:just}
Let $\alpha > 0$ be given. For each $\eps > 0$ denote by
$(\phia_{\eps,\alpha},\phic_{\eps,\alpha}, \vb_{{ij},{\eps,\alpha}}
)$ a minimizer of $\Ger$. There exists a sequence of positive
numbers $\{ \eps_k \}$ converging to zero such that $$(
H_{\eps_k}(\phia_{{\eps_k},\alpha}),
H_{\eps_k}(\phic_{{\eps_k},\alpha}), \phia_{{\eps_k},\alpha},
\phic_{{\eps_k},\alpha}, \vb_{{ij},{\eps_k,\alpha}})$$ converges
strongly in $[L^1(\Omega)]^2 \times [\lzo]^2 \times \mathcal{R}^4$
and the limit is a generalized minimizer of $\Ga$ in the set of
admissible vectors.
\end{proposition}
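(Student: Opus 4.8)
The plan is to carry out the relaxation (or $\Gamma$-limit) argument standard in the level set literature \cite{FSL05, CLT09A, CLT09}, now powered by the $L^1(\Omega)$-continuity of $F$ established in Theorem~\ref{theo:continuity} and Corollary~\ref{coro:continuity}. Throughout, write $m_\eps := (\phia_{\eps,\alpha}, \phic_{\eps,\alpha}, \vb_{{ij},{\eps,\alpha}})$ for a minimizer of $\Ger$, which exists by Lemma~\ref{lemma:13}. The argument has three movements: (1) bound the minimal values from above by $\inf \Ga$ along a suitable null sequence of $\eps$'s; (2) extract an admissible limit of the family $(H_\eps(\phia_{\eps,\alpha}), H_\eps(\phic_{\eps,\alpha}), \phia_{\eps,\alpha}, \phic_{\eps,\alpha}, \vb_{{ij},{\eps,\alpha}})$ as $\eps \to 0$; and (3) pass to the limit in $\Ger$ by lower semicontinuity and identify the limit as a generalized minimizer of $\Ga$ in the sense of Definition~\ref{def:vector}.

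For movement (1) I would fix a minimizer $(\tilde z_1, \tilde z_2, \tilde\phi_a, \tilde\phi_s, \tilde\vb)$ of $\Ga$ on the admissible set (whose existence was established above) and, for each $j$, a characterizing sequence $\{\phi_a^{n}\}_n, \{\phi_s^{n}\}_n$ with parameters $\eps_n \to 0$ chosen within $1/j$ of the infimum defining $\rho$ in Definition~\ref{def:vector}. Since $m_{\eps_n}$ minimizes $\Ger$ at level $\eps_n$ and $(\phi_a^n, \phi_s^n, \tilde\vb) \in [H^1(\Omega)]^2 \times \mathcal{R}^4$ is a competitor at the \emph{same} level, $\mathcal{G}_{\eps_n,\alpha}(m_{\eps_n}) \le \mathcal{G}_{\eps_n,\alpha}(\phi_a^n, \phi_s^n, \tilde\vb)$. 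As $n \to \infty$ the characterization gives $H_{\eps_n}(\phi_a^n) \to \tilde z_1$ and $H_{\eps_n}(\phi_s^n) \to \tilde z_2$ in $L^1(\Omega)$, whence $P_{\eps_n}(\phi_a^n, \phi_s^n, \tilde\vb) \to q(\tilde z_1, \tilde z_2, \tilde\vb)$ in $[L^1(\Omega)]^2$; continuity of $F$ then sends the residual to $\|F(q(\tilde z_1, \tilde z_2, \tilde\vb)) - E^\delta\|^2_{L^2(\Omega)}$, while the near-optimal choice keeps $\liminf_n R_\ve(\phi_a^n, \phi_s^n, \tilde\vb) \le R(\tilde z_1, \tilde z_2, \tilde\phi_a, \tilde\phi_s, \tilde\vb) + 1/j$. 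A diagonal selection over $j$ then produces a single sequence $\eps_k \to 0$ with $\limsup_k \mathcal{G}_{\eps_k,\alpha}(m_{\eps_k}) \le \Ga(\tilde z_1, \tilde z_2, \tilde\phi_a, \tilde\phi_s, \tilde\vb) = \inf \Ga$; in particular, nonnegativity of the residual makes $\alpha R_{\eps_k}(m_{\eps_k})$ uniformly bounded.

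For movement (2) I would exploit this uniform bound on $R_{\eps_k}(m_{\eps_k})$: the $H^1$-control on $\phia_{\eps_k,\alpha}, \phic_{\eps_k,\alpha}$ yields, along a subsequence, weak $H^1$ and strong $L^2(\Omega)$ convergence to some $\phia, \phic$ (Rellich), the uniform $\bv$-seminorm bound on $H_{\eps_k}(\phia_{\eps_k,\alpha}), H_{\eps_k}(\phic_{\eps_k,\alpha})$ yields strong $L^1(\Omega)$ convergence to some $z_1, z_2$ (compactness of $\bv \hookrightarrow L^1(\Omega)$), and $\vb_{{ij},{\eps_k,\alpha}} \to \vb_{ij}$ in $\mathcal{R}^4$. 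By construction the sequences $\{\phia_{\eps_k,\alpha}\}, \{\phic_{\eps_k,\alpha}\}$ together with $\{\eps_k\}$ are exactly the data required by Definition~\ref{def:vector}, so the limit $(z_1, z_2, \phia, \phic, \vb_{ij})$ is admissible. For movement (3), continuity of $F$ applied to $P_{\eps_k}(m_{\eps_k}) \to q(z_1, z_2, \vb_{ij})$ in $[L^1(\Omega)]^2$ passes the residual to the limit; $\rho(z_1, z_2, \phia, \phic) \le \liminf_k \big( |H_{\eps_k}(\phia_{\eps_k,\alpha})|_{\bv} + |H_{\eps_k}(\phic_{\eps_k,\alpha})|_{\bv} + \|(\phia_{\eps_k,\alpha}, \phic_{\eps_k,\alpha}) - (\phia_{,0}, \phic_{,0})\|^2_{[H^1(\Omega)]^2} \big)$ holds \emph{by definition} of $\rho$ as the infimum over characterizing sequences, ours being one admissible choice; and $\|\cdot\|^2$ on $\mathcal{R}^4$ is continuous. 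Hence $\Ga(z_1, z_2, \phia, \phic, \vb_{ij}) \le \liminf_k \mathcal{G}_{\eps_k,\alpha}(m_{\eps_k}) \le \inf \Ga$, and since the limit is admissible it is a generalized minimizer.

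The step I expect to be the main obstacle is movement (1): transferring optimality across \emph{different} smoothing levels. Because $m_{\eps_k}$ is optimal only for its own $\Ger$, the competitor must be inserted at the matching level $\eps = \eps_n$, and one must simultaneously push the residual through using the $L^1(\Omega)$-continuity of $F$ and prevent the penalty from inflating by selecting a characterizing sequence that nearly attains the infimum defining $\rho$ — the precise reason the functional $\rho$ in Definition~\ref{def:vector} is built as that infimum. Reconciling these two demands, and packaging the two-parameter bookkeeping ($\eps_n \to 0$ and $1/j \to 0$) into a single null sequence $\eps_k$ by diagonalization, is the delicate point; once the value inequality $\limsup_k \mathcal{G}_{\eps_k,\alpha}(m_{\eps_k}) \le \inf \Ga$ is secured, the compactness and lower-semicontinuity arguments of movements (2) and (3) are routine.
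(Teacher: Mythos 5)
Your proof is correct and is essentially the argument of \cite[Theorem~11]{CLT09}, which the paper invokes by citation without reproducing: the same three movements (transferring optimality across smoothing levels by inserting a near-optimal characterizing sequence of a generalized minimizer at the matching level $\eps_n$ and diagonalizing, extracting limits from the uniform $H^1$/$\bv$/$\R^4$ penalty bounds so that the minimizing family itself becomes a characterizing sequence for its limit, and concluding by lower semicontinuity of $\rho$ together with the $L^1$-continuity of $F$). The only cosmetic caveat is that Corollary~\ref{coro:continuity} is stated for piecewise constant coefficients while $P_{\eps_k}$ produces smeared, merely uniformly bounded ones; since its proof uses only the $L^\infty$ bound on $D(F)$, the estimate applies verbatim, a gloss the paper itself also makes.
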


Proposition~\ref{th:just} justifies the use functionals $\Ger$ in
order to obtain numerical approximations to the generalized
minimizers of $\Ga$. It is worth noticing that, differently from
$\Ga$, the minimizers of $\Ger$ can be actually computed. %In the
%next subsection we compute the first order conditions of optimality
%for the functional $\Ger$, which will allow us to compute the
%desired minimizers.
%
%
%
%
\section{A note on the numerical realization}\label{sec:numerical}
In order to develop a computational scheme to minimize the proposed
Tikhonov functionals
(\ref{eq:Tikho-func-smooth})-(\ref{eq:Tikhonov-functional}), one way
is looking for first order optimality condition for which gradient
or Newton-type algorithms can be implemented, see \cite{STCA13} and
references therein. Here we concentrate our attention to the inner
product structure of $L^2(\Omega)$. For the proposed $L^p$
approaches with $p \in [1, 2[$ some sub-gradient type algorithm may
be used, see \cite{Neubauer} and references therein.

We first provide a formal derivative for the least square term
$J(\mu_a, \mu_s): = \frac{1}{2}\|E^\delta - F(\mu_a,
\mu_s)\|^2_{L^2(\Omega)}$ in the functionals
(\ref{eq:Tikho-func-smooth})-(\ref{eq:Tikhonov-functional}).

Using the same notation of the Theorem~\ref{theo:dir-diff}, the
derivative of the least square term of $J$ at $(\mu_a, \mu_s) \in
D(F) \cap H^1(\Omega)$ in the direction $(\t \mu_a, \t \mu_s) \in
H^1(\Omega)$ can be written as
\begin{eqnarray}\label{eq:derivative_J1}
D\,J[\t \mu_a, \t \mu_s] = - \langle E^\delta - F(\mu_a,
\mu_s), F'(\mu_a, \mu_s)[\t \mu_a, \t \mu_s] \rangle_{L^2(\Omega)} %%+
%%2 \alpha \left( \langle \mu_a - \mu_{a,0}, \t \mu_a \rangle +
%%\langle \mu_s - \mu_{s,0}, \t \mu_s \rangle \right)\,.
\end{eqnarray}

By substituting (\ref{eq:dir-derivative}) in the first therm in the
right hand side of (\ref{eq:derivative_J1}), we have
\begin{eqnarray}\label{eq:derivative_J2}
\qquad D\,J[\t \mu_a, \t \mu_s] &  = & - \langle U(\mu_a, \mu_s)(
E^\delta - F(\mu_a, \mu_s)), \t \mu_a \rangle_{L^2(\Omega)} \\
&  & - \big\langle  \mu_a (E^\delta - F(\mu_a, \mu_s)), \int_S
u'(\mu_a, \mu_s; \s)[\t \mu_a, \t
 \mu_s] d\s \big\rangle_{L^2(\Omega)}\,. \nonumber
\end{eqnarray}
Let $v$ be the solution of the adjoint problem
\begin{eqnarray}\label{eq:adjoint-RTE}
T^* v = \mu_a (E^\delta  - F(\mu_a, \mu_s))\,,
\end{eqnarray}
with absorbing boundary conditions.

The substitution of (\ref{eq:adjoint-RTE}) into
equation~(\ref{eq:derivative_J2}) produces
\begin{eqnarray}\label{eq:derivative_J3}
D\, J[\t \mu_a, \t \mu_s]  & = & - \langle U(\mu_a, \mu_s)(
E^\delta - F(\mu_a, \mu_s)), \t \mu_a \rangle_{L^2(\Omega)} \\
  & & - \big\langle T^* v , \int_S u'(\mu_a, \mu_s; \s')[\t \mu_a, \t
 \mu_s] d\s' \big\rangle_{L^2(\Omega)} \,.\nonumber
\end{eqnarray}

Since $T^*v$ does not depend on the direction $\s'$, the
equation~(\ref{eq:derivative_J3}) can be written equivalently as
\begin{eqnarray}\label{eq:derivative_J31}
D\, J[\t \mu_a, \t \mu_s]  = & - \langle U(\mu_a, \mu_s)(
E^\delta - F(\mu_a, \mu_s)), \t \mu_a \rangle_{L^2(\Omega)} \\
 & - \langle T^* v , u'(\mu_a, \mu_s; \s')[\t \mu_a, \t
 \mu_s] \rangle_{L^2(D)} \,.\nonumber
\end{eqnarray}

Since $u'(\mu_a, \mu_s) = 0$ on $\partial \Omega$ (see
(\ref{eq:u'})), by using the divergence theorem, we can state that
\begin{eqnarray*}%\label{eq:derivative_J5}
0 & = & \int_{\partial \Omega} (\s \cdot \eta) v(x, \s) u'(x, \s)[\t
\mu_a, \t \mu_s] dx = \int_\Omega (\s \cdot \nabla) (v( x ,\s) u'(x, \s)[\t \mu_a, \t \mu_s]) dx \nonumber \\
& = & \int_\Omega v(x, \s) (\s \cdot \nabla) u' (x, \s)[\t \mu_a, \t
\mu_s] +  u' (x, \s)[\t \mu_a, \t \mu_s]  (\s \cdot \nabla) v(x, \s)
dx \\
& = & \langle v, Tu' \rangle_{L^2(\D)} - \langle T^* v, u'
\rangle_{L^2(\D)}\,. \nonumber
\end{eqnarray*}
From (\ref{eq:u'}) we have that
\begin{eqnarray}\label{eq:derivative_J6}
\langle v, Tu' \rangle_{L^2(\D)} & = \int_D v(x, \s) \left((\t \mu
_a + \t \mu_s) u(x, \s) - \t \mu_s \int_S \Theta(\s,
\s') u(x, \s') d\s' \right) dx d\s  \nonumber \\
& = \big\langle u v, \t \mu_a + \t \mu_s\rangle_{L^2(\D)} - \langle
u \int_S \Theta(\s, \s') v(\cdot, \s') d\s' , \t \mu_s
\big\rangle_{L^2(\D)}
\end{eqnarray}
From (\ref{eq:derivative_J2}) - (\ref{eq:derivative_J6}) we obtain
that
\begin{eqnarray}\label{eq:derivative_J7}
D\, J[\t \mu_a, \t \mu_s] & = & - \langle U(E^\delta  - F(\mu_a,
\mu_s), \t \mu_a \rangle_{L^2(\Omega)} +  \langle u v, \t \mu_a
\rangle_{L^2(\D)} \\
 & & + \big\langle u v, \t \mu_s\rangle_{L^2(\D)} - \langle u \int_S
\Theta(\s, \s') v(\cdot, \s') d\s , \t \mu_s
\big\rangle_{L^2(\D)}\,. \nonumber
\end{eqnarray}

A quick calculation shows that the derivative of the regularization
term satisfies
\begin{eqnarray}\label{eq:derivative_R}
\langle (I - \Delta) (\mu_a - \mu_{a,0}), \t \mu_a
\rangle_{L^2(\Omega)}   + \langle (I - \Delta) (\mu_s - \mu_{s,0}),
\t \mu_s \rangle_{L^2(\Omega)} \\
 =    - \int_{\partial \Omega} \frac{\partial}{\partial \eta}(\mu_a
- \mu_{a,0}) \t \mu_a dx - \int_{\partial \Omega}
\frac{\partial}{\partial \eta}(\mu_s - \mu_{s,0}) \t \mu_s dx \,.
\nonumber
\end{eqnarray}

Hence, the gradient of $\J_\alpha$ with respect to the absorption
and scattering coefficients can be formally written as
\begin{eqnarray} \label{eq:derivative_Ja}
\frac{\partial \J_\alpha}{\partial \mu_a}(\mu_a, \mu_s) & = &  -
U(\mu_a,
\mu_s)(E^\delta - F(\mu_a, \mu_s)) \\
& + &  \int_S u(\cdot, \s) v(\cdot,\s)d\s + 2 \alpha (I - \Delta) (\mu_a - \mu_{a,0})\,, \nonumber\\
\label{eq:derivative_Js} \frac{\partial \J_\alpha}{\mu_s}(\mu_a,
\partial \mu_s) & = & \int_S u(\cdot, \s) v(\cdot, \s) d\s \\ & - & \int_S
\int_S u(\cdot, \s) \Theta(\s, \s') v( \cdot, \s) d\s d\s' + 2
\alpha (I - \Delta) (\mu_s - \mu_{s,0})\,, \nonumber
\end{eqnarray}
subject to the homogeneous Neumann boundary conditions
\begin{eqnarray}\label{eq:Neumann}
\frac{\partial}{\partial \eta}(\mu_a - \mu_{a,0})  = 0\,,\qquad
\frac{\partial}{\partial \eta}(\mu_s - \mu_{s,0})  = 0\,,
\end{eqnarray}
respectively.

In \cite{STCA13} limited-memory BFGS was used to solve
(\ref{eq:derivative_Ja})-(\ref{eq:derivative_Js}) while a finite
element model of the (RTE) equation~(\ref{eq:RTE}) was  used to
determine the optical absorption and scattering coefficients.

%%%------------------------------------------------------%

\subsection{Optimality conditions for the Tikhonov functional $\Ger$}

For the numerical implementation of the level set approach is
necessary to derive the first order optimality conditions for a
minimizer of the functionals $\Ger$. With this finality, we consider
$\Ger$ in (\ref{eq:m-reg}) and we look for the G\^ateaux directional
derivatives with respect to $\phia$, $\phic$. For easiness of
presentation, we assume here that the constant values $\vb_{ij}$ are
known. An algorithm for unknown constant values was implemented in
\cite{CLT09A}.

Given the composition of $P_\ve$ with the forward operator $F$ in
the level set approach and the self-adjointness\footnote{Notice that
$H'_\eps(t) =  1/\eps \mbox{ if }  t \in (-\eps, 0)\,,  0 \mbox{
else}\,.$} of $H'_\eps(\varphi)$, the optimality conditions for a
minimizer of the functional $\Ger$ can be written in the form of the
system of equations
%%%%
%\begin{subeqnarray} \label{eq:formal0}
\begin{eqnarray}
\frac{\partial }{\partial \phia} \Ger(\phia, \phic) =
L_{\eps,\alpha}^1(\phia,\phic) + \alpha (I - \Delta)(\phia -
\phia_{,0}) \,,\\
\frac{\partial }{\partial \phic} \Ger(\phia, \phic) =
L_{\eps,\alpha}^2(\phia,\phic) + \alpha (I - \Delta)(\phic -
\phic_{,0}) \,,
\end{eqnarray}
%\end{subeqnarray}
%
with the homogeneous Neumann boundary condition
\begin{eqnarray}
\frac{\partial} {\partial \eta} (\phia - \phia_{,0}) = 0 \,, \qquad
\frac{\partial} {\partial \eta}(\phic - \phic_{,0}) = 0
\end{eqnarray}
where
%
%\begin{subequations} \label{eq:formal1}
\begin{eqnarray*}
L_{\eps,\alpha}^1(\phia,\phic) & = &  (a_1 - a_2) \,
H'_\eps(\phia)\, \frac{\partial J}{\partial \phia}(P(\phia, \phic,
\vb_{ij})) + \alpha \, \left[ H'_\eps(\phia) \, \nabla \!\cdot\!
     \left( \frac{\nabla H_\eps(\phia)}{|\nabla H_\eps (\phia)|} \right) \right] \\
L_{\eps,\alpha}^2(\phia,\phic) & = &  (c_1 - c_2) \,
H'_\eps(\phic)\, \frac{\partial J}{\partial \phic}(P(\phia, \phic,
\vb_{ij})) + \alpha \, \left[ H'_\eps(\phic) \, \nabla \!\cdot\!
\left( \frac{\nabla H_\eps(\phic)}{|\nabla H_\eps (\phic)|} \right)
\right]
\end{eqnarray*}
%\end{subequations}
%
and
$\frac{\partial J}{\partial \phia}(P(\phia, \phic, \vb_{ij}))$ and
$\frac{\partial J}{\partial \phic}(P(\phia, \phic, \vb_{ij}))$ are
obtained analogous to (\ref{eq:derivative_J7}) for absorption and
scattering coefficients parameterized by $P_\ve(\phia, \phic,
\vb_{ij})$.

\subsection{The real case: Multiple illumination positions}

As it is well known for the diffusive approximation for (PAT),
non-uniqueness may be encountered when both absorption and
scattering coefficients are recovered without include additional
information into the problem \cite{BU10, BR11}. One additional
information generally used to avoid the non-uniqueness is using a
multiple - illumination approach, whereby a set of images are
obtained using sources placed at different positions around the
image domain. It was also reported in \cite{BR11} that the
multiple-illumination approach improves the ill-posedness of the
(QPAT) inverse problem.

A few theoretical modification of the presented approach implies the
stated results. Indeed, if $N_m$ is the number of source positions,
then one alternative is looking for the Tikhonov-type functionals
(\ref{eq:Tikho-func-smooth}) - (\ref{eq:Tikhonov-functional}) with
the misfit replaced by
\begin{eqnarray}
\qquad \sum_{m=1}^{N_m} \frac{1}{p}\|E^{\delta}_m - F_m(\mu_a,
\mu_s)\|^p_{L^p(\Omega)} \,\, \mbox{ and } \, \, \sum_{m=1}^{N_m}
\frac{1}{2}\|E^{\delta}_m - F_m(P(\phia, \phic,
\vb_{ij})\|^2_{L^2(\Omega)}\,,
\end{eqnarray}
respectively. Another alternative is writing the problem as a system
of nonlinear operator equations
\begin{eqnarray}
F_m(\mu_a, \mu_s) = E_m^\delta\,,\quad m=1\, \cdots\, N_m\,,
\end{eqnarray}
and then uses a Kaczmarz-type strategy \cite{BDCL2010} for
regularize the problem.

\section{Conclusions and future directions}\label{sec:conclusion}

Existence and stability of approximated solution have been shown to
determine the absorption and scattering coefficients in the (RTE)
model for (QPAT), using Tikhonov-type regularization approaches.
Sufficient conditions to obtain the regularization properties of the
approximated solution has been shown by proving properties of the
forward problem as continuity, compactness and Fr\'echet derivative.
The results concern with different topologies which includes the
physical and numerical issues. Although we do not present any
implementation, our results imply in the theoretical guarantee of
regularization properties of the approach presented in
\cite{STCA13}. Using \textit{a priori} information of the regularity
of the coefficient, we propose different Tikhonov-type
regularization. We are confident that the level set approach for
piecewise continuous coefficient can improve significatively the
results in \cite{STCA13}.

Since PAT is a new imaging modality, many theoretical and
computational issues are still open, e.g. \cite{KK2008, STCA13,
TKPVSAK10, TZC2010, SU2009, BU10, BRUZ11, BR11}. The numerical
implementation of the regularization approaches which has been
proposed in this contribution will be the subject of future work.
Given the hyperbolically nature of the (RTE) equation may imply in
numerical instability of standard Galerking discretizations. In
particular, it will bring the discussion on the numerical issue in a
real PAT image \cite{STCA13}.

As far as the authors known, iterative regularization
\cite{KaltNeuScher08, EngHanNeu96} was not attempted in the PAT
context. It shall be considered in future works.

\section{Acknowledgments}
A.D.C. acknowledges support from CNPq - Science Without Border grant
200815/2012-1 and from ARD-FAPERGS grant 0839 12-3. Part of this
work was conducted during the post-doc of A.D.C. at the Department
of Computer Science at UBC. F.T.C acknowledges support from FAPERGS
grant PqG 0839 12-3.
%
%

%----------------------------------------------------------------------------%
%BIBLIOGRAPHY
\section{Bibliography}
\bibliographystyle{elsarticle-harv}
\bibliography{pat}

\end{document}